\theoremstyle{plain}
\newtheorem{thm}{Theorem}[section]
\newtheorem{lem}[thm]{Lemma}
\newtheorem{prop}[thm]{Proposition}
\newtheorem{cor}[thm]{Corollary}
\newtheorem{conj}[thm]{Conjecture}
\theoremstyle{definition}
\newtheorem{defn}[thm]{Definition}
\newtheorem{ex}[thm]{Example}
\newtheorem{rem}[thm]{Remark}
\newcommand{\Z}{\mathbb Z}
\newcommand{\C}{\mathbb C}
\newcommand\sO{{\mathcal O}}
\newcommand\sH{{\mathcal H}}
\newcommand\sF{{\mathcal F}}
\newcommand\sG{{\mathcal G}}
\newcommand\sI{{\mathcal I}}
\DeclareMathOperator{\Pic}{Pic}
\DeclareMathOperator{\Alb}{Alb}
\newcommand{\codim}{{\rm codim}\,}
\DeclarePairedDelimiter\abs{\lvert}{\rvert}
\DeclarePairedDelimiter\norm{\lVert}{\rVert}
\let\oldabs\abs
\def\abs{\@ifstar{\oldabs}{\oldabs*}}
\let\oldnorm\norm
\def\norm{\@ifstar{\oldnorm}{\oldnorm*}}
\newcommand{\bigslant}[2]{{\raisebox{.2em}{$#1$}\left/\raisebox{-.2em}{$#2$}\right.}}
\title{$L^2$-Betti Numbers and Convergence of Normalized Hodge Numbers via the Weak Generic Nakano Vanishing Theorem}
 \author{\small{Luca F. Di Cerbo} \\ \scriptsize{University of Florida} \\ \footnotesize{\textsf{ldicerbo@ufl.edu}} \and 
\small{Luigi Lombardi}\footnote{Partially supported by the Rita Levi-Montalcini Program, Simons Foundation, SIR 2014:  AnHyC: “Analytic aspects in complex and hypercomplex
geometry” (code RBSI14DYEB), and  Grant 261756 of the Research Councils of Norway.} \\ 
\scriptsize{University of Milan}\\ \footnotesize{\textsf{luigi.lombardi@unimi.it}}}
\date{}
\begin{document}

\maketitle

\begin{abstract}
 We study the rate of growth of normalized Hodge numbers along a tower of abelian covers of a smooth projective variety with semismall Albanese map. These bounds are in some cases  optimal.  Moreover, we compute the $L^2$-Betti numbers of irregular varieties that satisfy the weak generic Nakano vanishing theorem (\emph{e.g.}, varieties with semismall Albanese map).  Finally, we study the convergence of normalized plurigenera along towers of abelian covers of any irregular variety.   As an application, we extend a result of Koll\'ar concerning the multiplicativity of  higher plurigenera of a smooth projective variety of general type, to a wider class of varieties. In the Appendix, we study irregular varieties for which the first Betti number diverges along a tower of abelian covers induced by the Albanese variety. 
 
\end{abstract}
\vspace{6cm}
\tableofcontents\quad\\

\vspace{1cm}

\section{Introduction and Main Results}

In this paper, we study the asymptotic behavior of Hodge and Betti numbers of sequences of coverings of complex projective varieties with semismall Albanese map. Similar problems have attracted considerable interest over the last four decades, and they have been extensively studied in a variety of different geometric contexts. For instance, in \cite{Wallach} and \cite{Wallach1}, DeGeorge and Wallach  study the asymptotic behavior of Betti numbers on regular coverings of compact locally symmetric spaces of non-compact type (\emph{e.g.}, compact real and complex hyperbolic manifolds). The problem addressed in \cite{Wallach, Wallach1} is natural for researchers interested in the cohomology of locally symmetric varieties, and it can be easily described.  In what follows, we rephrase the main result in \cite{Wallach, Wallach1} in terms of \emph{normalized} Betti numbers. We refer to page 714 in the introduction of paper \cite{Bergeron}, or to Chapter 5 in the book \cite{Luck02}, for more details concerning the connections between the representation theoretic results of DeGeorge and Wallach, and the asymptotic properties of the cohomology of compact locally symmetric varieties.

Given a torsion free lattice $\Gamma$ acting co-compactly on a symmetric space of non-compact type, say $G/ K$, a sequence of nested, normal, finite index subgroups $\{\Gamma_i\}$ of $\Gamma$ is a cofinal filtration of $\Gamma$ if $\cap_i\Gamma_i$ is the identity element. Define $\pi_i\colon X_i\rightarrow X$ as the finite index regular cover of $X\stackrel{{\rm def}}{=}\Gamma \backslash (G/ K)$ associated to $\Gamma_i$. The main result of \cite{Wallach} implies that
\[
\lim_{i \to \infty}\frac{b_{k}(X_i)}{\deg \pi_i } \; = \; 0 \quad  \mbox{ for  any }\quad  k\neq \frac{1}{2}\dim (G/K),
\] 
where $b_k(X_i)$ denotes the $k$-th Betti number of $X_i$. We refer to the ratio $b_k(X_i)/\deg \pi_i$ as the \emph{normalized} $k$-Betti number of the cover $\pi_i\colon  X_i \rightarrow X$. Thus, for $k$ different from the middle dimension, the growth of Betti numbers in a tower of coverings associated to a cofinal filtration has sub-degree (or sub-volume) growth, and the normalized Betti numbers converge to zero. 

The study of Betti numbers in a sequence of coverings continues to fascinate many mathematicians; see for example the recent work of Abert \emph{et al.} \cite{Bergeron}. In this remarkable paper, the authors extend the results of DeGeorge--Wallach to sequences of compact locally symmetric varieties which Benjamini--Schramm converge to their universal covers.
We refer to \cite{Bergeron} for the precise definition of this notion of convergence; here we simply remark that a tower of coverings associated to a cofinal filtration does indeed Benjamini--Schramm converge. The techniques employed both in \cite{Wallach, Wallach1} and \cite{Bergeron} are based on representation theory, and they do not immediately generalize to non-symmetric varieties. Nevertheless, there is a large and growing literature concerning these kind of problems outside the locally symmetric context; see for example \cite{Yeung1}, \cite{DS17}, \cite{Bergeron2} and the bibliography therein. These papers employ geometric analysis techniques, and they extend much of the DeGeorge--Wallach theory to negatively curved compact Riemannian manifolds which are \emph{not} locally symmetric.
 
Here we contribute to this circle of ideas by studying the cohomology of complex projective varieties with \emph{semismall} Albanese map, a further instance of varieties of non-locally symmetric type. Our approach is based on tools of Algebraic Geometry and Hodge Theory, and it employs sheaf-theoretic techniques specific to this class of varieties. As an important ingredient, we employ the generic vanishing theory of bundles of holomorphic $p$-forms developed by Popa and Schnell in \cite{PS13} via Saito's theory of mixed Hodge modules. 

We now turn to details and present our main results. Let $X$ be an irregular smooth projective complex variety of dimension $n$, and let $a_X\colon X \to \Alb(X)$ be its Albanese map. The Albanese torus $\Alb(X)$ is an abelian variety of dimension $g=h^{1, 0}(X)$ (we recall that the variety $X$ is irregular if $g >0$). We say that the Albanese map $a_X$ is \emph{semismall} if for every integer  $k>0$ the following inequalities hold
\begin{equation}\label{ssmall}
{\rm codim} \{\, x \in a_X(X) \, | \, \dim \big( a_X^{-1}(x) \big)  \geq k \, \} \; \geq 2k.
\end{equation}
In particular, if $a_X$ is semismall, then $a_X$ is generically finite onto its image, but the converse does not hold in general.  For instance, the Albanese map of the blow-up of an abelian variety  along a smooth subvariety of codimension $c$ is semismall if and only if $c\leq 2$.
Next, let 
\[
\mu_d \colon \Alb(X) \to \Alb(X), \quad \mu_d(x) \; = \; dx \; = \; \overbrace{x+\cdots + x}^{d\mbox{-times}}, \quad d\geq 1
\]
be the multiplication maps on $\Alb(X)$, and define the varieties
$X_d$ via the fiber product diagrams
\begin{equation}\label{diagrintro}
\centerline{ \xymatrix@=32pt{
 X_d \ar[d]^{\varphi_d} \ar[r]^{a_d\,\,\,\,\,\,\,} & \Alb(X) \ar[d]^{\mu_d}  \\
 X  \ar[r]^{a_X\,\,\,\,\,\,\,}  & \Alb(X).\\}} 
 \noindent 
\end{equation}
Our first result controls the rate of growth of the Hodge numbers of $X_d$ with respect to the degrees of the covers $\varphi_d \colon  X_d\rightarrow X$. We refer to the ratios $h^{p, q}(X_d) / \deg \varphi_d$ as the \emph{normalized} $(p, q)$-Hodge numbers.
The following theorem  provides an \emph{effective} estimate for the rate of convergence
of the normalized Hodge numbers,  and it also yields the optimal rate of convergence of one of them.

\begin{thm}\label{main1}
	Let $X$ be a smooth projective variety of complex dimension $n$, and let $\varphi_d \colon X_d \to X$ be the \'etale covers defined in 	\eqref{diagrintro}. If the Albanese map $a_X$ is semismall, then
	for any pair of integers $(p,q) \in  [0,n]^2$ there exists a positive constant $B(p,q)$ such that 
	\begin{equation}\label{degreeh}
	\frac{h^{p,q}(X_d)}{\deg \varphi_d } \; \leq \; B(p,q) \, d^{ \, -2 \abs{ n-p-q }   } \quad \mbox{for all} \quad d\geq 1.
	\end{equation}
	Moreover, we have 	
\begin{equation}\label{limhodge}	
	\lim_{d\to \infty} \frac{h^{p,q}(X_d)}{\deg \varphi_d } \;  = \;  (-1)^q  \; \chi(\Omega_X^p) \quad \mbox{if} \quad p+q=n.
	\end{equation}
Conversely, if $X$ is a smooth projective variety of dimension $n$ that satisfies both  $\dim \Alb(X) > n$ and the bounds in \eqref{degreeh} for 
all pairs of indexes $(p,q)\in [0,n]^2$, then the Albanese map $a_X$ is semismall.
	\end{thm}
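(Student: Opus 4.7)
The plan is to decompose the Hodge cohomology of $X_d$ into a sum of twisted cohomology groups on $X$, then bound each summand via the weak generic Nakano vanishing theorem of Popa--Schnell and a counting argument for torsion points in cohomological support loci.

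First I would exploit the cartesian square \eqref{diagrintro} together with the isotypical decomposition of the isogeny $\mu_d$: writing $\hat A := \Picz{\Alb(X)}$, one has $\mu_{d*}\sO_{\Alb(X)} = \bigoplus_{\alpha \in \hat A[d]} P_\alpha$, and flat base change yields $\varphi_{d*}\sO_{X_d} = \bigoplus_{\alpha \in \hat A[d]} a_X^* P_\alpha$. Since $\varphi_d$ is \'etale, $\varphi_{d*}\Omega_{X_d}^p \cong \Omega_X^p \otimes \varphi_{d*}\sO_{X_d}$, so
\[
h^{p,q}(X_d) \; = \; \sum_{\alpha \in \hat A[d]} h^q\bigl(X, \Omega_X^p \otimes a_X^* P_\alpha\bigr), \qquad \deg \varphi_d \; = \; d^{2g},
\]
and under the canonical isomorphism $a_X^* \colon \hat A \cong \Picz X$ this is a sum over all $d$-torsion of $\Picz X$.

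For the decay estimate \eqref{degreeh}, I would invoke the weak generic Nakano vanishing theorem of Popa--Schnell \cite{PS13}: semismallness of $a_X$ forces the cohomological support locus $V^q(\Omega_X^p) := \{L \in \Picz X \mid h^q(X, \Omega_X^p \otimes L) > 0\}$ to have codimension at least $|n-p-q|$ in $\Picz X$. By the structure theorem of Green--Lazarsfeld--Simpson (extended to bundles of forms by Popa--Schnell), $V^q(\Omega_X^p)$ is a finite union of torsion translates of subabelian varieties; a standard lattice count bounds the number of $d$-torsion points in such a subset of codimension $c$ by $O(d^{2(g-c)})$. Combined with a uniform upper-semicontinuity bound $h^q(X, \Omega_X^p \otimes L) \leq M(p,q)$ on the compact $\Picz X$, this gives $h^{p,q}(X_d) \leq C(p,q) \, d^{2(g-|n-p-q|)}$, and division by $d^{2g}$ yields \eqref{degreeh}. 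For the limit \eqref{limhodge}, the case $p+q=n$ isolates $n-p$ as the unique degree of generic nonvanishing: for $L$ outside the proper closed subset $\bigcup_{i \neq n-p} V^i(\Omega_X^p)$ one has $h^{n-p}(X, \Omega_X^p \otimes L) = (-1)^{n-p}\chi(\Omega_X^p) = (-1)^q\chi(\Omega_X^p)$, while the exceptional locus contains only $O(d^{2g-2})$ torsion points, a negligible contribution after normalization by $d^{2g}$.

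For the converse, I would argue by contrapositive. If $a_X$ is not semismall, the converse direction of the Popa--Schnell theory yields some pair $(p,q)$ with $\codim V^q(\Omega_X^p) < |n-p-q|$; the structure theorem again writes $V^q(\Omega_X^p)$ as a union of torsion translates of subabelian varieties of dimension $g-c > g - |n-p-q|$, and a lower bound $|V^q(\Omega_X^p) \cap \hat A[d]| \geq c_0 \, d^{2(g-c)}$ on torsion-point counts (valid for infinitely many $d$) forces $h^{p,q}(X_d) \geq c_0 \, d^{2(g-c)}$, in contradiction with \eqref{degreeh}. The hypothesis $\dim \Alb(X) > n$ ensures that $\Picz X$ is large enough for the codimension inequalities to be nontrivial, ruling out degenerations in which they hold automatically for dimensional reasons. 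The main obstacle is precisely this converse: it rests both on the two-way equivalence between semismallness and the full family of Popa--Schnell codimension bounds, and on sharp enough lower bounds for torsion-point counts inside each component of $V^q(\Omega_X^p)$ to convert a failure of the codimension inequality into a quantitative violation of \eqref{degreeh}.
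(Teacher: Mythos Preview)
Your proposal is correct and follows essentially the same route as the paper: the isotypical decomposition of $\varphi_{d*}\sO_{X_d}$, the Popa--Schnell codimension estimate $\codim V^q(\Omega_X^p)\geq |p+q-n|-\delta(a_X)$ together with torsion-linearity of the loci, the torsion-point count for the upper bound, and for the converse the sharpness clause of Popa--Schnell plus a lower torsion-point count for infinitely many $d$. One sharpening of the point you flagged as the main obstacle: the precise role of the hypothesis $g=\dim\Alb(X)>n$ in the converse is to guarantee that the witnessing locus is \emph{positive-dimensional}, since $\dim V^{q_0}(\Omega_X^{p_0}) = g - |n-p_0-q_0| + \delta(a_X) \geq g - n + 1 > 0$, which is exactly what is needed for the lower bound $\tau_d \geq d^{2\dim V^{q_0}(\Omega_X^{p_0})}$ to grow with $d$ and eventually contradict \eqref{degreeh}.
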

	
In order to prove the previous theorem, in Section \ref{secasy} we develop a general machinery that 
establishes the convergence of the  normalized cohomology ranks 
$h^q(X_d , \varphi_d^* \sF ) / \deg \varphi_d$ of a coherent sheaf $\sF$ on $X$ subject to certain cohomological conditions (\emph{cf}. Theorem \ref{cor1}). In particular, Theorem \ref{main1} corresponds to the case of bundles of holomorphic $p$-forms $\sF = \Omega_X^p$. In Section \ref{secplurigenera}, we  apply this machinery to the case of pluricanonical bundles $\sF = \omega_X^{\otimes m}$ for $m\geq 1$.
We refer to Section \ref{secbetti} for the details of the proof of Theorem \ref{main1}, and to a  generalization that takes into account all values of the defect of semismallness of the Albanese map (\emph{cf}.  Theorem \ref{thmdefect} and \eqref{defect}).
Finally,  Theorem \ref{main1} implies the following statement regarding the normalized Betti numbers. 

\begin{cor}\label{maincor1}
Let $X$ be a smooth projective variety of dimension $n$ such that the Albanese map $a_X$ is semismall. Then for any integer $k\neq n$ there exists a positive constant $C(k)$ such that
\begin{equation*}\label{degreeb}
\frac{b_k(X_d)}{\deg \varphi_d } \; \leq \; C(k) \, d^{ \, -2 \abs{n-k}}  \quad \mbox{for all} \quad d\geq 1.
\end{equation*}
Furthermore, we have 
$$\lim_{d\to \infty} \frac{b_n(X_d)}{\deg \varphi_d } \;  = \;  (-1)^n  \; \chi_{\rm top}(X).$$
\end{cor}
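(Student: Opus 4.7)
The plan is to deduce Corollary \ref{maincor1} directly from Theorem \ref{main1} by invoking the Hodge decomposition on each smooth projective variety $X_d$, which gives
\[
b_k(X_d) \; = \; \sum_{p+q=k} h^{p,q}(X_d).
\]
Since only finitely many pairs $(p,q) \in [0,n]^2$ contribute (at most $n+1$ of them), all the work reduces to understanding each Hodge summand separately via Theorem \ref{main1}.

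For the effective estimate, I would fix $k \neq n$ and note that for every $(p,q)$ with $p+q=k$ we have $|n-p-q| = |n-k|$. Applying the bound \eqref{degreeh} term by term and summing yields
\[
\frac{b_k(X_d)}{\deg \varphi_d} \; \leq \; \sum_{p+q=k} B(p,q) \, d^{-2|n-k|} \; = \; C(k)\, d^{-2|n-k|},
\]
where $C(k) \stackrel{\rm def}{=} \sum_{p+q=k} B(p,q)$ is a finite positive constant. This is the first claim.

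For the limit when $k=n$, I would sum the asymptotic formula \eqref{limhodge} over all pairs $(p,q)$ with $p+q=n$. Writing $q = n-p$, this gives
\[
\lim_{d\to \infty} \frac{b_n(X_d)}{\deg \varphi_d} \; = \; \sum_{p=0}^{n} (-1)^{n-p} \chi(\Omega_X^p) \; = \; (-1)^n \sum_{p=0}^{n} (-1)^p \chi(\Omega_X^p).
\]
The proof then concludes by invoking the classical identity
\[
\chi_{\rm top}(X) \; = \; \sum_{p=0}^{n} (-1)^p \, \chi(\Omega_X^p),
\]
which itself is immediate from the Hodge decomposition on $X$: expanding $\chi(\Omega_X^p) = \sum_q (-1)^q h^{p,q}(X)$ and resumming produces $\sum_{p,q}(-1)^{p+q}h^{p,q}(X) = \sum_k (-1)^k b_k(X) = \chi_{\rm top}(X)$.

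There is essentially no obstacle here: the corollary is a mechanical consequence of Theorem \ref{main1} combined with the Hodge decomposition. The only minor point worth being careful about is the sign bookkeeping in the identification of $(-1)^n \sum_p (-1)^p \chi(\Omega_X^p)$ with $(-1)^n \chi_{\rm top}(X)$, and the observation that interchanging the finite double sum over $(p,q)$ with the limit in $d$ is legitimate since only finitely many terms appear.
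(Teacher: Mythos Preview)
Your proof is correct and follows essentially the same approach as the paper: the first bound is derived from Theorem~\ref{main1} via the Hodge decomposition \eqref{bettideco}, and the middle-degree limit is obtained by summing \eqref{limhodge} over $p+q=n$ together with the identity $\sum_{p}(-1)^{n-p}\chi(\Omega_X^p)=(-1)^n\chi_{\rm top}(X)$. The paper packages the second part as a reference to Proposition~\ref{corbetti}, but the content is the same computation you wrote out.
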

 
When combined with L\"uck's Approximation Theorem (\emph{cf}. \cite[Main Theorem]{Luck}), Theorem \ref{main1} can be used to compute the $L^2$-Betti numbers of the Albanese universal cover $\overline{\pi} \colon \overline{X} \rightarrow X$, when the Albanese map of $X$ is semismall. Throughout the paper, the \emph{Albanese universal cover} is defined as the pullback
 of $a_X$ via the universal topological cover $\pi$ of $\Alb(X)$:

\begin{equation}\label{diagrintro2}
\centerline{ \xymatrix@=32pt{
 \overline{X} \ar[d]^{\overline \pi} \ar[r]^{\overline a\,\,\,\,\,\,\,} & \C^g \ar[d]^{\pi}  \\
 X  \ar[r]^{a_X\,\,\,\,\,\,\,}  & \Alb(X), \\}} 
 \noindent 
\end{equation}
where $g=h^{1, 0}(X) = \dim \Alb(X)\neq 0$. Notice that, up to a finite cover, the Albanese universal cover coincides with the universal abelian cover. Indeed, these infinite covers are equal if and only if $H_1(X, \Z)$ is torsion free. We refer to Section \ref{secl2} for the formal definition of $L^2$-Betti numbers of any infinite $G$-covering map $X' \rightarrow  X' / G$.  It turns out that our calculation of $L^2$-Betti numbers holds 
for a  more general class of smooth irregular projective varieties, which we now define.  
We say that $X$ satisfies the \emph{weak generic Nakano vanishing theorem}
if for any pair of integers  $(p,q) \in [0,n]^2$ such that $p+q \neq n$ we have
$$ H^q\big( X ,\Omega_X^p \otimes \alpha_{p,q}  \big) \; = \; 0$$
for at least one  topologically trivial line bundle $\alpha_{p,q} \in \Pic^0(X)$.
Instances of varieties that satisfy this property  are varieties with semismall Albanese map (\emph{cf}. Theorem \ref{psthm}), and varieties that admit one holomorphic $1$-form such that its zero-set is either finite of empty (\emph{cf}. Theorem \ref{glthm}). We refer to \cite{gl:gv1} and  \cite[Introduction, Sections 3.1 and 3.2]{L13} for examples and basic properties of this class of varieties. 

\begin{thm}\label{main2}
	Let $X$ be a smooth projective variety of complex dimension $n$  and let $\overline X$ be the universal Albanese cover. 
If 	$X$ satisfies the weak generic Nakano vanishing theorem, then the $L^2$-Betti numbers of $\overline X$ are:
		\begin{equation*}
	b^{(2)}_k (\overline{X})=\begin{cases} 
	(-1)^n \chi_{\rm top}(X)  &\mbox{if }\quad    k =  n  \\
	0 & \mbox{if }\quad   k  \neq n.
	\end{cases}
	\end{equation*}
\end{thm}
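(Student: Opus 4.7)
The strategy is to reduce the computation of the $L^2$-Betti numbers of $\overline X$ to the convergence of normalized Betti numbers along the tower $\{\varphi_d\colon X_d\to X\}_{d\ge 1}$, and then invoke L\"uck's Approximation Theorem. In turn, the convergence of normalized Betti numbers is obtained by first establishing convergence of normalized Hodge numbers under the weak generic Nakano hypothesis and then summing over $p+q=k$.

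For the Hodge-theoretic step I would apply the general convergence machinery of Theorem \ref{cor1} to the coherent sheaves $\sF=\Omega_X^p$. The weak generic Nakano hypothesis supplies, for each $(p,q)\in[0,n]^2$ with $p+q\ne n$, a line bundle $\alpha_{p,q}\in\Pic^0(X)$ with $H^q(X,\Omega_X^p\otimes\alpha_{p,q})=0$; this is exactly the cohomological input required by Theorem \ref{cor1} to conclude
\[
\lim_{d\to\infty}\frac{h^{p,q}(X_d)}{\deg\varphi_d}\;=\;0\qquad\text{whenever }p+q\ne n.
\]
Since $\varphi_d$ is \'etale we have $\varphi_d^*\Omega_X^p=\Omega_{X_d}^p$ and $\chi(X_d,\Omega_{X_d}^p)=\deg(\varphi_d)\cdot\chi(X,\Omega_X^p)$ (by Hirzebruch--Riemann--Roch applied to $\pi^*\mathrm{td}=\mathrm{td}$ under an \'etale cover); combined with the vanishing above this forces
\[
\lim_{d\to\infty}\frac{h^{p,\,n-p}(X_d)}{\deg\varphi_d}\;=\;(-1)^{n-p}\chi(\Omega_X^p).
\]
Summing over $p+q=k$ and using the Hodge decomposition of the smooth projective variety $X_d$ then gives
\[
\lim_{d\to\infty}\frac{b_k(X_d)}{\deg\varphi_d}\;=\;\begin{cases}0 & k\ne n,\\ (-1)^n\chi_{\rm top}(X) & k=n,\end{cases}
\]
where the $k=n$ identification uses the Hirzebruch identity $\chi_{\rm top}(X)=\sum_p(-1)^p\chi(\Omega_X^p)$.

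To finish I would invoke L\"uck's Approximation Theorem. The deck group of $\overline X\to X$ is $\Lambda:=\pi_1(\Alb(X))\cong\Z^{2g}$, a finitely generated abelian and hence residually finite group, and the intermediate \'etale covers $X_d\to X$ correspond to the subgroups $d\Lambda\subset\Lambda$, which form a cofinal chain since $\bigcap_d d\Lambda=\{0\}$. L\"uck's theorem then identifies each $L^2$-Betti number $b_k^{(2)}(\overline X)$ with the limit computed above, yielding the stated formula. The main obstacle is ensuring that Theorem \ref{cor1} genuinely operates under the bare weak generic Nakano hypothesis, i.e.\ from the vanishing at a \emph{single} twist $\alpha_{p,q}\in\Pic^0(X)$, rather than under the stronger semismall Albanese assumption underlying Theorem \ref{main1} (where effective decay rates $d^{-2|n-p-q|}$ are produced); the relevant generic-vanishing/Fourier--Mukai argument must extract sub-linear growth for $\sum_{\alpha\in\widehat\Alb(X)[d]}h^q(X,\Omega_X^p\otimes a_X^*\alpha^{-1})$ purely from upper semicontinuity and a proper jump locus in $\Pic^0(X)$. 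Once that piece of the general machinery is in place, everything else is a routine assembly of Hodge decomposition, the \'etale Euler-characteristic identity, and L\"uck's theorem.
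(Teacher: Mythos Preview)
Your approach is essentially the paper's own: apply Theorem \ref{cor1} to $\sF=\Omega_X^p$, sum via Hodge decomposition to obtain Proposition \ref{corbetti}, and conclude with L\"uck's Approximation Theorem. Your concern at the end is unwarranted: Theorem \ref{cor1} is stated and proved exactly under the hypothesis that $V^i(\sF)\subsetneq\widehat A$, which is precisely what a single vanishing twist $\alpha_{p,q}$ provides; the sublinear growth of $\sum_{\alpha\in S_d}h^q(X,\Omega_X^p\otimes f^*\alpha)$ is extracted in the paper not via Fourier--Mukai but via Raynaud's theorem (Proposition \ref{remray}), which confines the torsion of any proper closed subset of $\widehat A$ to a finite union of torsion translates of proper subtori.

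One genuine technical slip: the subgroups $d\Lambda\subset\Lambda=\Z^{2g}$ do \emph{not} form a chain (e.g.\ $2\Lambda$ and $3\Lambda$ are incomparable), so L\"uck's theorem in its standard nested-sequence form does not apply directly to $\{X_d\}$. The paper remedies this by passing to the subsequence $Y_d\simeq X_{d!}$, for which $(d!)\Lambda$ is nested with trivial intersection; since Proposition \ref{corbetti} already gives convergence along the full sequence $d$, the limit along the subsequence $d!$ is the same, and the argument goes through.
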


It is tantalizing to compare Theorem \ref{main2} with an old conjecture of Singer concerning the $L^2$-Betti numbers of the universal covering space of an aspherical manifold.

\begin{conj}[Singer Conjecture]
If $X$ is a closed aspherical manifold of real dimension $2n$, then
\begin{equation*}
b^{(2)}_{k}(\widetilde{X})=\begin{cases} 
(-1)^n \chi_{\rm top}(X)  &\mbox{if }\quad    k =  n  \\
0 & \mbox{if }\quad   k  \neq n,
\end{cases}
\end{equation*}
where $\pi \colon \widetilde{X}\rightarrow X$ is the topological universal cover of $X$.
\end{conj}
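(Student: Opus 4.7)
Singer's conjecture is long open, so this is a research plan rather than a proof sketch; I would flag at the outset where the essential obstacle lies.

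First, I would reduce the statement to the vanishing $b^{(2)}_k(\widetilde X)=0$ for $k<n$. Because $X$ is aspherical, $\widetilde X$ is contractible, and Poincar\'e duality for $L^2$-Betti numbers (Cheeger--Gromov) gives $b^{(2)}_k(\widetilde X)=b^{(2)}_{2n-k}(\widetilde X)$, which disposes of the degrees $k>n$ once the low ones are controlled. Atiyah's $L^2$-index theorem then supplies
\[
\sum_{k=0}^{2n} (-1)^k\, b^{(2)}_k(\widetilde X) \;=\; \chi_{\rm top}(X),
\]
so once the vanishing for $k\ne n$ is established, the middle value is forced to equal $(-1)^n\chi_{\rm top}(X)$. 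In other words, the whole content of the conjecture is the vanishing statement below the middle degree.

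Second, I would compute $b^{(2)}_k(\widetilde X)$ as the von Neumann dimension of the space of $L^2$-harmonic $k$-forms on $\widetilde X$ and try to kill these harmonic forms for $k<n$. The natural tools are: a Bochner--Weitzenb\"ock argument, which works under suitable curvature assumptions (for example Dodziuk's spectral estimates on negatively curved covers, or the Cheeger--Gromov vanishing for aspherical manifolds admitting an $S^1$-action); a K\"ahler Hodge-decomposition approach that, in the presence of a K\"ahler hyperbolic structure, yields Gromov's theorem; and, in the algebro-geometric setting of this paper, a strategy modelled on the proof of Theorem \ref{main2} --- one would combine a generic Nakano-type vanishing for bundles of holomorphic $p$-forms with L\"uck's approximation theorem to pass from the asymptotics of Hodge numbers along a tower of finite covers to the vanishing of $L^2$-harmonic forms on the universal cover.

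The hard step is the vanishing itself. No purely topological input is known to force $L^2$-harmonic $k$-forms on a contractible Riemannian manifold to be trivial for $k<n$: some additional geometric or holomorphic structure (curvature bound, K\"ahler hyperbolicity, generic vanishing of Hodge bundles, symmetric-space structure, \ldots) appears indispensable, and this is precisely why the conjecture remains open in the general aspherical case. My plan therefore succeeds only in settings where such extra structure is available --- for example in the framework of Theorem \ref{main2}, where the weak generic Nakano vanishing \emph{is} the missing ingredient --- and a proof in full generality would require a genuinely new idea to bridge the gap between plain asphericity of $X$ and middle-dimensional concentration of $L^2$-harmonic forms on $\widetilde X$.
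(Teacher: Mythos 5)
The statement you were asked to prove is stated in the paper only as a conjecture (Singer's Conjecture); the paper offers no proof of it, and it remains open in general. Your proposal correctly recognizes this, so there is no paper proof to compare against, and your submission is by your own account a research plan rather than a proof.

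Your two reductions are sound and standard: $L^2$ Poincar\'e duality for a closed oriented $2n$-manifold gives $b^{(2)}_k(\widetilde X)=b^{(2)}_{2n-k}(\widetilde X)$, and Atiyah's $L^2$-index theorem gives $\sum_k(-1)^k b^{(2)}_k(\widetilde X)=\chi_{\rm top}(X)$, so the conjecture is indeed equivalent to the vanishing $b^{(2)}_k(\widetilde X)=0$ for $k\neq n$ (equivalently for $k<n$). You have also correctly located the genuine gap: no known argument derives this vanishing from asphericity alone, and every existing proof in special cases (Gromov's K\"ahler hyperbolic case, the locally symmetric case, Cheeger--Gromov's $S^1$-action case, and the paper's Theorem \ref{main2}) injects additional geometric or holomorphic structure. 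One caution on your third bullet: the paper's mechanism --- weak generic Nakano vanishing plus L\"uck approximation along the tower $X_{d!}$ --- computes $L^2$-Betti numbers of the \emph{Albanese} universal cover, which is generally a much smaller cover than the topological universal cover (its deck group is the free abelian quotient $H_1(X,\Z)/{\rm tors}$), and the varieties to which it applies need not be aspherical; so that route, as it stands, does not even address the conjecture's actual hypothesis and conclusion, a point the paper itself raises as an open question rather than a theorem. In short: your reductions are correct, your identification of the obstruction is correct, and no proof is supplied --- consistent with the fact that none exists in the paper or in the literature.
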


Interestingly, Theorem \ref{main2} provides a vanishing theorem analogous \sloppy to Singer's conjecture when the $L^2$-Betti numbers are computed with respect to the Albanese universal cover. It seems worth asking  whether Theorem \ref{main2} holds when the $L^2$-Betti numbers are computed with respect to the topological universal cover, and more generally, if Singer's conjecture can be extended meaningfully outside the class of aspherical manifolds, at least within the class of  projective varieties. 
 
We point out that  in \cite[Theorem 3 (i)]{JZ}  Jost and Zuo prove, among other things, a special case of Theorem \ref{main2}. More specifically, they prove Theorem \ref{main2} in the case of  smooth projective varieties whose  Albanese map $a_X\colon X\rightarrow \Alb(X)$ is an immersion. The techniques used by Jost and Zuo rely on analytical arguments introduced by Gromov in \cite{Gro}, where the author confirms Singer's conjecture for \emph{K\"ahler hyperbolic} manifolds. These manifolds include K\"ahler manifolds with negative and pinched sectional curvature, and do not  \emph{not} contain any rational curve. 
On the contrary, varieties with Albanese map semismall may contain rational curves. Finally, we remark that in \cite{LMW17b} the semismallness condition of the Albanese map is studied by means of topological generic vanishing theory.  Via the general strategy of  \cite[Theorem 2.28]{LMW17a}, it is possible that the techniques of \cite[Theorem 1.2]{LMW17b} suffice to give an alternative proof of Theorem \ref{main2} in the case of varieties with  semismall  Albanese map; however we do not pursue this direction in this paper. Finally, we also point out the related work of Budur \cite{budur}, where the author shows polynomial periodicity of the Hodge numbers of congruence covers.

In Section \ref{secplurigenera}, we apply the techniques of Section \ref{secasy} to prove a version of Theorem \ref{main1} for pluricanonical bundles $\omega_X^{\otimes m}$ with $m\geq 2$. More precisely, we compute the following limits 
\begin{equation}\label{genera}
\widetilde{P}_m(X) \; \stackrel{{\rm def}}{ =  } \;  \lim_{d\to \infty} \frac{P_m(X_d)}{ \deg \varphi_d}    \; = \;  \lim_{d\to \infty} \frac{h^0(X_d,\omega_{X_d}^{\otimes m})}{ \deg \varphi_d }, \quad m\geq 2
\end{equation}
of normalized plurigenera  (whenever they exist).
Let $I\colon X \to Z$ be a smooth representative of the Iitaka fibration, and let  $q(I) = q(X) - q(Z)$ be the difference of the irregularities. In Proposition \ref{plurinu}, we prove that the limits in \eqref{genera} exist and  are computed by:
\begin{equation}\label{intropm}
\widetilde{P}_m(X) \;  = \;  \begin{cases} 
P_m(X) &\mbox{if }\quad    q(I) = 0 ,  \\
0 & \mbox{if }\quad   q(I) > 0.
\end{cases}
\end{equation}
We recall that if $X$ is of general type (hence satisfying $q(I)=0$), then a classical result of Koll\'ar \cite[Proposition 9.4]{KP} (\emph{cf}. also \cite[Theorem 11.2.23]{Laz2}) ensures  that its higher plurigenera  are  multiplicative with respect to \emph{any} \'etale cover. As suggested by \eqref{intropm}, we extend this property to smooth projective varieties satisfying  $q(I)=0$, when the \'etale covers are induced  by the Albanese variety via base change. Also, as a by-product, we show that \cite[Proposition 9.4]{KP} cannot be extended to varieties with $q(I)>0$. We refer to Section \ref{secplurigenera} for the proof of Theorem \ref{multiplication}, and examples of varieties with $q(I)=0$ that are not of general type.\\

In the Appendix (Section \ref{unbounded}), we discuss the irregular varieties  for which the first Betti number $b_1$  goes to infinity along the unramified covers induced by the multiplication  maps on the Albanese variety (regardless of the semismallness of the Albanese map). Building upon results of Beauville \cite{B}, we prove that if this is the case, then the base variety must be fibered over a curve having either genus at least two, or genus equal to one and the fibration admits  two multiple fibers whose multiplicities are not coprime.  Moreover, if the group $H^2(X, \Z)$ is torsion free, then the converse of this result holds as well (\emph{cf}. Theorem  \ref{qfibr}). We remark that, in many interesting cases, the converse   can be used to deduce that the first Betti number is indeed uniformly bounded on these abelian covers. 
Very recently, Stover \cite{Sto17} and Vidussi \cite{Vid17} study the boundedness of the first Betti number of abelian covers of the Cartwright--Steger surface \cite{CS}.  While our analysis does not fully recover their theorems,  it has the advantage to  put in perspective  their results in the framework of higher-dimensional varieties.\\


\noindent\textbf{Acknowledgments}. 
The authors thank  Rob Lazarsfeld and Christian Schnell for their interest and suggestions, and  Stefano Vidussi and Botong Wang for email correspondence.
They thank the Mathematics Department of Stony Brook University for the ideal research environment they enjoyed at the beginning of this project.
The first named author    thanks the University of Florida where most of these ideas crystallized, and where some of the results were presented in the Fall  2018 Topology Seminar.  During the preparation of this work, the second named author visited  both the University of Florida and  the Max Planck Institute for Mathematics  in Bonn. He thanks these institutions for their hospitality and support. Finally, he thanks Daniele Angella  of the University of Florence for financial support.\\ \\

\section{Weak $GV$-Sheaves}\label{secloci}
In this section, we recall a few basic results from generic vanishing theory.  The following presentation is tailored to our purposes; we refer to \cite{gl:gv1, gl:gv2, pareschi+popa:gv,Sch} for a comprehensive introduction.

Let  $X$ be a smooth projective complex variety  of dimension $n$, and let $f \colon X \to A$ be a morphism to an abelian variety of dimension $g$.  The \emph{non-vanishing loci} attached to a coherent sheaf $\sF$  on $X$ relative to $f\colon X \to A$ are defined as
$$V^i (\sF) \;  = \; \big\{ \, \alpha \in \widehat{A} \; \big| \;  H^i(X,   \sF \otimes f^*\alpha ) \neq 0  \, \big\}\quad (i\geq 0)$$
(in the notation $V^i(\sF)$ we omit the reference to the morphism $f$). 
Here $\widehat A \simeq \Pic^0(A)$ denotes the dual torus of $A$, which parameterizes 
  isomorphism classes of holomorphic line bundles  with trivial first Chern class. 
By the Semicontinuity Theorem \cite[Theorem III.12.8]{Ha}, the loci $V^i(\sF)$ are algebraic closed subsets of $\widehat A$.

\begin{defn}\label{defgv}
The sheaf $\sF$ satisfies $GV$ (or the \emph{generic vanishing property})   if 
$\codim_{\widehat A} V^i(\sF)  \geq  i$  for all  $i>0$. 
\end{defn}

 A fundamental result of Green and Lazarsfeld proves
 that if the Albanese map $a_X \colon X \to \Alb(X)$ is generically finite onto its image, then the canonical bundle $\omega_X$ satisfies $GV$. 
 Moreover,  the loci 
$V^i(\omega_X)$  are \emph{torsion linear} varieties for all $i\geq 0$ regardless the Albanese dimension of $X$, \emph{i.e.}  every irreducible component $T\subset V^i(\omega_X)$ is of type $\beta + T_0$ where $\beta \in \Pic^0(\Alb(X)) \simeq \Pic^0(X)$ is an element of finite order,  and $T_0 \subset \Pic^0(X)$ is a subtorus  (\emph{cf}. \cite[Theorem 0.1]{gl:gv2}, \cite{Sim93} and \cite[Corollary 19.2]{Sch}).  
For the purposes of this paper, we will consider the following weaker notion of generic vanishing.
\begin{defn}
The sheaf  $\sF$ satisfies \emph{weak} $GV$ \emph{with index} $p$ if 
$V^i(\sF)\subsetneq \widehat A$ for all $i\neq p$.
\end{defn}
 Obviously, $GV$-sheaves satisfy weak $GV$  with index $0$. 
 We conclude this subsection with a useful result which we will use in Section \ref{secplurigenera}. The   Euler characteristic of a sheaf $\sF$ is defined as $\chi(\sF) = \sum_{i\geq 0} (-1)^i h^i(X, \sF)$.

\begin{lem}\label{lemchi}
If $\sF$ is a weak $GV$-sheaf  with index $p$, then   $$\chi(\sF)  \; = \;  (-1)^p \,  h^p(X,\sF\otimes f^*\alpha)$$ for a generic element  $\alpha \in \widehat A$.
In particular, $\chi(\sF)=0$ if $\sF$ is a weak $GV$-sheaf with respect to two distinct indexes.
\end{lem}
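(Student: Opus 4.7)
The plan is to combine two standard facts: (i) the Euler characteristic is deformation-invariant in the flat family of twists $\{\sF \otimes f^*\alpha\}_{\alpha \in \widehat{A}}$, and (ii) the weak $GV$ hypothesis forces vanishing of all but one cohomology group on a non-empty Zariski open subset of $\widehat{A}$.

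First, I would use the Poincar\'e line bundle on $A \times \widehat{A}$ to pull back to $X \times \widehat{A}$ via $(f, \mathrm{id})$, obtaining an algebraic family of twists of $\sF$ parametrized by $\widehat{A}$ whose fiber over $\alpha$ is $\sF \otimes f^*\alpha$. Since all the line bundles $f^*\alpha$ are topologically trivial, the function $\alpha \mapsto \chi(X, \sF \otimes f^*\alpha)$ is constant on the connected variety $\widehat{A}$. Evaluating at $\alpha = \sO_A$ yields
\[
\chi(\sF) \; = \; \chi(\sF \otimes f^*\alpha) \quad \textrm{for every } \alpha \in \widehat{A}.
\]

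Next, I would exploit the weak $GV$ property. By assumption, for each $i \neq p$ the locus $V^i(\sF) \subsetneq \widehat{A}$ is a proper closed subset. Since $H^i(X, \sF \otimes f^*\alpha) = 0$ automatically for $i > n$, only finitely many such loci are relevant, and the open set
\[
U \; \stackrel{\mathrm{def}}{=} \; \widehat{A} \smallsetminus \bigcup_{i \neq p} V^i(\sF)
\]
is non-empty. For any $\alpha \in U$ the cohomology groups $H^i(X, \sF \otimes f^*\alpha)$ vanish for every $i \neq p$, so combining with the previous display gives
\[
\chi(\sF) \; = \; \sum_{i \geq 0} (-1)^i h^i(X, \sF \otimes f^*\alpha) \; = \; (-1)^p \, h^p(X, \sF \otimes f^*\alpha),
\]
which is the first assertion.

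For the second assertion, suppose $\sF$ satisfies weak $GV$ with respect to two distinct indexes $p \neq p'$. Intersecting the open subsets produced by the argument above for each of the two indexes yields a non-empty open $U' \subset \widehat{A}$ on which $H^i(X, \sF \otimes f^*\alpha) = 0$ for every $i \neq p$ \emph{and} for every $i \neq p'$, hence for every $i$. Applying deformation invariance once more gives $\chi(\sF) = 0$. The only subtlety to check is the openness/non-emptiness of the complement in step two; this is immediate from the Semicontinuity Theorem quoted right before Definition \ref{defgv}, so no genuine obstacle is expected.
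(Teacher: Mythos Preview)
Your proposal is correct and follows essentially the same approach as the paper's proof: both combine the invariance of $\chi(\sF)$ under twists by line bundles in $\Pic^0(X)$ with the generic vanishing forced by the weak $GV$ hypothesis. The paper's argument is simply a terser version of yours, omitting the explicit Poincar\'e-bundle setup and the verification that the complement $U$ is non-empty.
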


\begin{proof}
If $\alpha \in \widehat A$ is generic, then the cohomology groups $H^i(X,\sF\otimes f^*\alpha)$ vanish for all $i\neq p$. Since $\chi(\sF)$ is invariant under twists with line bundles in $\Pic^0(X)$, we find $\chi(\sF)= \chi( \sF \otimes f^*\alpha ) = (-1)^p h^p(X,\sF \otimes f^*\alpha)$. Moreover, if $\sF$ is a weak $GV$-sheaf with respect to  two distinct indexes, then all the loci $V^i(\sF)$ are proper subset of $\widehat A$, hence $\chi(\sF)=0$.
\end{proof}

\subsection{(Weak) Generic Nakano Vanishing Theorem}

Let $X$ be a smooth projective variety of dimension $n$, and let $a_X \colon  X \to \Alb(X)$ be its Albanese map. Moreover, denote by 
$\Omega_X^p \stackrel{{\rm def}}{ = } \wedge^p \Omega_X$ the bundle of holomorphic $p$-forms on $X$.
Following \cite[Definition 12.1]{PS13}, we say that $X$ satisfies the \emph{generic Nakano vanishing theorem} if $\codim_{\Pic^0(X)} V^q(\Omega_X^p) \geq \abs{p+q-n}$ for all indexes $p$ and $q$. In this paper, we consider varieties that satisfy  a  weaker  vanishing condition.

\begin{defn}\label{weaknakano}
The variety $X$ satisfies the \emph{weak generic Nakano vanishing theorem} if $\Omega_X^p$ is a weak $GV$-sheaf with index $n-p$ for all $p=0,\ldots ,n$.
\end{defn}

It turns out that  $X$ satisfies the generic Nakano vanishing theorem if and only if it satisfies  a condition on the dimension of the fibers of the Albanese map.
This goes as follows. Set $V_l \stackrel{{\rm def}}{=}  \{ \, y\in \Alb(X) \, | \, \dim a_X^{-1}(y)\geq l\, \}$ and define the  \emph{defect of semismallness} of $a_X$ as:
\begin{equation}\label{defect} 
 \delta(a_X) \; = \; \max_{l \in \mathbf{N}} \{  2l - n + \dim V_l \}.
 \end{equation}
 
 \begin{defn}
We say that $a_X$ is \emph{semismall} if $\delta(a_X)=0$. Equivalently, $a_X$ is semismall if the inequalities of \eqref{ssmall} are satisfied for all $k\geq 1$. 
\end{defn}

\begin{thm}[Popa--Schnell]\label{psthm}
If  $X$ is a smooth projective variety of dimension $n$, then 
$$\codim_{\Pic^0(X)} V^q(\Omega_X^p) \; \geq \; \abs{ p \, + \, q \, - \, n } \, - \, \delta(a_X)$$ for all $p\geq 0$ and $q\geq 0$.
Moreover, there exists a pair $(p,q)$ for which the equality is attained.
In particular, if  $a_X$  is semismall, then  $X$ satisfies the generic Nakano vanishing  theorem.
\end{thm}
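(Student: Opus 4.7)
The plan is to establish this by pushing forward through the Albanese map $a_X$ using Saito's theory of mixed Hodge modules, and then invoking the generic vanishing theorem for Hodge modules on abelian varieties, which is the main new technical input of \cite{PS13}.

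First, I would realize the sheaves $\Omega_X^p$ as graded pieces of the Hodge filtration attached to the trivial Hodge module $\mathbb{Q}_X^H[n]$. Saito's direct image theorem, combined with the decomposition theorem, gives an isomorphism
$$a_{X*}\mathbb{Q}_X^H[n] \; \simeq \; \bigoplus_{k} M_k[-k]$$
in $D^b\mathrm{MHM}(\Alb(X))$, where each $M_k$ is a polarizable pure Hodge module on $\Alb(X)$, further decomposing into simple summands of the form $IC_{Z_j}(L_j)$ supported on subvarieties $Z_j\subset\Alb(X)$. By Saito's compatibility of the Hodge filtration with projective direct images, the groups $H^q(X,\Omega_X^p\otimes a_X^*\alpha)$ are encoded in the hypercohomology of graded de Rham complexes $\mathrm{gr}^F_{-p}\mathrm{DR}(M_k)$ twisted by the topologically trivial line bundle $P_\alpha$ associated to $\alpha\in\widehat{\Alb(X)}\simeq\Pic^0(X)$.

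Next, I would translate the defect of semismallness into constraints on the pairs (shift $k$, support $Z_j$) appearing in the decomposition. By a standard analysis of the Beilinson--Bernstein--Deligne/Saito decomposition theorem applied to $a_X$, a summand $IC_{Z_j}(L_j)[-k]$ can occur only when the combination of $|k|$ and $\dim Z_j$ is controlled by the dimensions of the fiber-jump loci $V_l$; one checks that the extremal contributions are exactly measured by $\delta(a_X)$. I would then apply Popa--Schnell's generic vanishing theorem for pure Hodge modules on abelian varieties: if $M$ is a pure Hodge module on $\Alb(X)$ with support of codimension $c$, then the non-vanishing locus of $H^i(\Alb(X),\mathrm{gr}^F_{-p}\mathrm{DR}(M)\otimes P_\alpha)$ has codimension in $\widehat{\Alb(X)}$ bounded below in terms of $i$ and $c$. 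Summing these estimates over the summands $M_k$ appearing in the decomposition and matching the bidegree $(p,q)$ yields the desired inequality $\codim_{\Pic^0(X)}V^q(\Omega_X^p)\geq|p+q-n|-\delta(a_X)$.

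For the sharpness statement, I would locate a simple summand $IC_Z(L)$ of some $M_{k_0}$ that realizes $\delta(a_X)$, i.e.\ that corresponds to an index $l$ with $\delta(a_X)=2l-n+\dim V_l$. Tracking the corresponding graded piece of its de Rham complex and its contribution to a specific cohomological bidegree produces a pair $(p,q)$ for which equality holds. The semismall case follows immediately upon setting $\delta(a_X)=0$.

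The main obstacle is the careful bookkeeping in the middle step: one must convert the numerical semismallness data (inequalities among $l$, $\dim V_l$, and $n$) into precise bounds on the shifts and supports of the perverse summands, and then align these bounds bidegree-by-bidegree with Popa--Schnell's generic vanishing estimate on $\Alb(X)$. This is exactly the place where the full strength of the Fourier--Mukai analysis of $\mathrm{gr}^F\mathrm{DR}(M)$ from \cite{PS13} is needed; once it is in place, both the inequality and the sharpness are formal consequences.
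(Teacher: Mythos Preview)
The paper does not prove this theorem at all: it is stated as a result of Popa--Schnell and immediately attributed to \cite[Theorem 3.2]{PS13}, with the one-line remark that it ``is proved by means of Saito's theory of mixed Hodge modules and the Fourier--Mukai transform.'' Your sketch is consistent with that description and with the actual argument in \cite{PS13}, so there is no discrepancy in approach; you are simply supplying details where the paper deliberately gives none.
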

The previous theorem appears  in \cite[Theorem 3.2]{PS13}, and it is proved by means of Saito's theory of mixed Hodge modules and the Fourier--Mukai transform. Besides varieties with semismall Albanese map, another class of varieties that satisfies Definition \ref{weaknakano} is provided by the following result of Green and Lazarsfeld \cite[Theorem 3.1]{gl:gv1}.

\begin{thm}[Green--Lazarsfeld]\label{glthm}
Let $X$ be a smooth projective variety. If $X$  carries a holomorphic $1$-form such that its zero-set is either finite or empty, then $X$  satisfies the weak generic Nakano vanishing theorem.
\end{thm}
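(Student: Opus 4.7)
To prove the theorem, I would follow the Koszul-complex approach, using that the $1$-form $\omega$ yields a natural complex of vector bundles whose exactness is controlled by its zero locus. Set $Z = Z(\omega)$, which is zero-dimensional by hypothesis. On the open complement $U = X \ssm Z$ the form $\omega$ is nowhere vanishing, producing a split injection $\sO_U \hookrightarrow \Omega_U^1$ with locally free quotient $\sQ$ of rank $n - 1$, and hence short exact sequences $0 \to \wedge^{p-1} \sQ \to \Omega_U^p \to \wedge^p \sQ \to 0$ for each $p$. Equivalently, the Koszul complex
$$
K^\bullet : 0 \to \sO_X \xrightarrow{\wedge\omega} \Omega_X^1 \xrightarrow{\wedge\omega} \cdots \xrightarrow{\wedge\omega} \Omega_X^n \to 0
$$
is exact on $U$, so its cohomology sheaves $\sH^i(K^\bullet)$ are skyscrapers supported on $Z$; twisting by any $\alpha \in \Pic^0(X)$ preserves these properties.

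Next I would consider the hypercohomology spectral sequence
$$
E_1^{p,q} = H^q(X, \Omega_X^p \otimes \alpha) \Longrightarrow \mathbb{H}^{p+q}(X, K^\bullet \otimes \alpha), \qquad d_1 = \cup\omega.
$$
Because the cohomology sheaves $\sH^i(K^\bullet)$ are skyscrapers, the abutment is concentrated in a controlled range. The central Hodge-theoretic input is the K\"ahler-type identity
$$
T T^* + T^* T \; = \; \abs{\omega}^2 \cdot \mathrm{id}, \qquad T = \wedge \omega,
$$
valid on smooth $\alpha$-valued forms with respect to a K\"ahler metric on $X$ and a flat Hermitian metric on the unitary representative of $\alpha$. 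Since $\abs{\omega}^2$ is strictly positive on $U$, the linear-algebraic principle that $\ker T = \mathrm{im}\, T$ for nilpotent $T$ with invertible anti-commutator $\{T, T^*\}$ produces, via a harmonic-theoretic argument, exactness of the derivative complex
$$
0 \to H^q(X, \alpha) \xrightarrow{\cup\omega} H^q(X, \Omega_X^1 \otimes \alpha) \to \cdots \to H^q(X, \Omega_X^n \otimes \alpha) \to 0
$$
for every $q$ and every $\alpha \in \Pic^0(X)$.

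Finally, I would combine exactness of the derivative complex with the twisted Hodge decomposition $H^k(X, \mathcal{L}_\alpha) \cong \bigoplus_{p+q=k} H^q(X, \Omega_X^p \otimes \alpha)$ for the flat unitary rank-one local system $\mathcal{L}_\alpha$ attached to $\alpha$. A Morse--Novikov-style consequence of $\omega$ being nowhere vanishing on the full-measure open set $U$ is that, for generic $\alpha$, the twisted Betti cohomology $H^k(X, \mathcal{L}_\alpha)$ vanishes for $k \neq n$. Hence every Hodge summand off the anti-diagonal $p + q = n$ vanishes, giving $V^q(\Omega_X^p) \subsetneq \Pic^0(X)$ for every $q \neq n - p$, which is precisely the weak generic Nakano vanishing with index $n - p$.

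The main obstacle is the Hodge-theoretic step: transferring the pointwise positivity $\abs{\omega}^2 > 0$ on $U$ into exactness of the derivative complex and vanishing of twisted Betti cohomology off the middle degree. This requires a careful harmonic analysis on $X$, extending the $L^2$-argument across the codimension-$n$ degenerate set $Z$; the fact that $Z$ is zero-dimensional is essential here, since cohomology classes on $X$ are not obstructed by sets of codimension $n$.
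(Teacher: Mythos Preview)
The paper does not prove this result but cites \cite[Theorem 3.1]{gl:gv1}, noting only that it ``relies on the deformation theory of the derivative complexes associated to the bundles $\Omega_X^p$.'' Your proposal omits exactly this ingredient, and the substitutes fail. Your exactness claim for the $\cup\omega$-complex is actually false when $Z\neq\emptyset$: writing $\omega=\sum f_i\,dz_i$ locally, the hypothesis that $Z$ is zero-dimensional makes $(f_1,\dots,f_n)$ a regular sequence, so $\sH^k(K^\bullet)=0$ for $k<n$ while $\sH^n(K^\bullet)\cong\sO_Z$, and hence $\mathbb{H}^n(X,K^\bullet\otimes\alpha)\cong H^0(\sO_Z)\neq 0$; but if every $E_1$-row were exact you would get $E_2=0$ and $\mathbb{H}^*=0$. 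The pointwise identity $TT^*+T^*T=|\omega|^2$ does not lift to an identity on harmonic representatives (multiplication by a nonconstant function does not preserve harmonicity), so the ``extension across $Z$'' is not a technicality --- the statement you want to extend to $X$ is simply untrue. The Morse--Novikov step is circular: via the twisted Hodge decomposition, ``$H^k(X,\mathcal{L}_\alpha)=0$ for generic unitary $\alpha$ and $k\neq n$'' is \emph{equivalent} to the weak generic Nakano vanishing, and Novikov theory only addresses the one-parameter family of non-unitary local systems determined by $[\mathrm{Re}\,\omega]$, with Morse inequalities that require nondegenerate zeros.

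What Green--Lazarsfeld use is the complex in the $q$-direction. For $v=\bar\omega\in H^1(X,\sO_X)\cong T_\alpha\Pic^0(X)$, a semicontinuity/base-change argument shows that the middle cohomology of
\[
H^{q-1}(\Omega_X^p\otimes\alpha)\xrightarrow{\ \cup v\ }H^q(\Omega_X^p\otimes\alpha)\xrightarrow{\ \cup v\ }H^{q+1}(\Omega_X^p\otimes\alpha)
\]
bounds $h^q(\Omega_X^p\otimes\alpha_t)$ for generic nearby $\alpha_t$. Their Hodge-theoretic lemma --- which uses \emph{both} operators $\cup\omega$ and $\cup\bar\omega$ on harmonic forms --- establishes exactness of this complex only off the anti-diagonal $p+q=n$ (consistent with $\mathbb{H}^n\neq 0$), and it is this deformation-theoretic link, not any twisted-Betti vanishing, that yields $V^q(\Omega_X^p)\subsetneq\Pic^0(X)$.
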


The previous theorem relies  on the deformation theory of the derivative complexes associated to  $\Omega_X^p$.
A natural question is the characterization of  varieties that satisfy Definition \ref{weaknakano}.
Here we note that a variety that satisfies  the weak generic Nakano vanishing theorem does not necessarily carry a holomorphic $1$-form whose zero-set is either finite or empty.  For instance, consider a smooth projective variety $Y$  of general type such that its Albanese map is an immersion and  $\codim_{\Alb(Y)} Y =2$ (for instance a genus $3$ curve in its Jacobian). Then the blow-up $Z$ of $\Alb(Y)$ along $Y$ is the counterexample we are looking for. In fact, by \cite[Theorem 2.1]{psform}, any holomorphic $1$-form on $Y$ has at least one zero, and its pull-back to $Z$ vanishes along some curves in the exceptional divisor. Moreover, all $1$-forms of $Z$ are obtained in this way as 
$H^0(Z,\Omega_Z) = H^0(\Alb(Y),\Omega_{\Alb(Y)}) = H^0(Y, \Omega_Y)$. On the other hand, the Albanese map of $Z$ is semismall so that $Z$ satisfies the generic Nakano vanishing theorem.

\section{Limits of Normalized Cohomology Ranks}\label{secasy}
 
Let $X$ be a smooth projective variety of complex dimension $n$, and $f\colon X \to A$ be a morphism to an abelian variety, as in Section \ref{secloci}. Given any integer $d\geq 1$, we denote by  $\mu_d\colon A\to A$ the multiplication map $\mu_d(x)=dx$. Furthermore, by means of the fiber product construction, we define the varieties  $X_d$ as follows:
\begin{equation}\label{fundamental}
\centerline{ \xymatrix@=32pt{
 X_d \ar[d]^{\varphi_d} \ar[r]^{f_d} & A \ar[d]^{\mu_d}  \\
 X  \ar[r]^{f}  & A .\\}} 
 \noindent
\end{equation}
In general the varieties $X_d$ may be disconnected, but if $f=a_X$ is the Albanese map they are irreducible.
Finally, we  set  $ \sF_d \stackrel{{\rm def}}{=} \varphi_d^* \sF$ if $\sF$ is a coherent sheaf on $X$. In this section we aim to calculate the following limits of \emph{normalized cohomology ranks}:
\begin{equation*}\label{limitscoh}
\liminf_{d\to \infty} \frac{h^p(X_d, \sF_d)} {\deg \varphi_d } \quad \quad \mbox{and} \quad \quad \limsup_{d\to \infty} \frac{h^p(X_d, \sF_d)} {\deg \varphi_d }.
\end{equation*}
To this end, we introduce first  some notation.
We denote by $r_i$ the number of irreducible components of $V^i(\sF)$, and by $v_i$ the maximum  dimension of an irreducible component of $V^i(\sF)$. Moreover, we set:
\begin{gather*}
M_i \; = \; \max \big\{h^i(X,\sF \otimes f^*\alpha) \, | \, \alpha \in V^i(\sF) \, \big\}\\
m_i \; = \; \min \big\{h^i(X,\sF \otimes f^*\alpha) \, | \, \alpha \in V^i(\sF) \, \big\}.
\end{gather*}
Finally, we denote by  $T^i_d$ the set of $d$-torsion points of  $V^i(\sF)$, and by $\tau_d^i = \big| T_d^i\big|$  its cardinality.
We  use  the following lemma in order to bound $\tau^i_d$.

\begin{lem}\label{dtor}
Let $V$ be a complex torus  and  $S= p_0 +B$ be a translate of a subtorus $B\subset V$, and  let $d\geq 1$ be an integer.
If the set of $d$-torsion points of $S$ is not empty, then it consists of exactly $d^{2\dim B}$ elements.
\end{lem}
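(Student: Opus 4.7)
The plan is to reduce the statement about the translate $S = p_0 + B$ to the corresponding statement about the subtorus $B$ itself, and then compute the $d$-torsion of a complex torus directly from its uniformization.

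First I would use the non-emptiness hypothesis: fix a $d$-torsion point $q_0 \in S$, and consider the translation bijection $\tau_{q_0} \colon B \to S$, $b \mapsto q_0 + b$. For any $b \in B$ one has
\[
d(q_0 + b) \; = \; d q_0 + d b \; = \; d b,
\]
so $q_0 + b$ is $d$-torsion in $V$ if and only if $d b = 0$ in $V$. Since $B$ is a subtorus of $V$, the $d$-torsion of $B$ coincides with the intersection of $B$ with the $d$-torsion of $V$, so $\tau_{q_0}$ restricts to a bijection between the $d$-torsion of $B$ and the $d$-torsion of $S$.

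It therefore suffices to count the $d$-torsion of $B$. Writing $B = \C^{\dim B}/\Lambda$ for a lattice $\Lambda$ of rank $2\dim B$, the kernel of multiplication by $d$ on $B$ is $\tfrac{1}{d}\Lambda/\Lambda \cong (\Z/d\Z)^{2\dim B}$, which has precisely $d^{2\dim B}$ elements. Combined with the bijection above, this yields $|T| = d^{2\dim B}$ where $T$ denotes the set of $d$-torsion points of $S$.

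There is essentially no obstacle here; the only subtle point is the necessity of the non-emptiness hypothesis, which is what allows us to fix a base point $q_0$ and convert the torsor $S$ into the group $B$. Without such a $q_0$, there is no canonical way to transfer the group structure (and indeed $S$ can easily fail to contain any $d$-torsion point, e.g.\ when $p_0$ is generic modulo $B$).
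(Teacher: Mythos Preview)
Your proof is correct and takes essentially the same approach as the paper's: both set up a bijection between the $d$-torsion of $S$ and a coset of $B[d]$ inside $B$. The paper translates by the given point $p_0$ and lands in the fiber $\nu_d^{-1}(-dp_0)$ of multiplication by $d$ on $B$, while you translate by a chosen $d$-torsion point $q_0$ and land in the kernel $B[d]$ itself; either way the count is $d^{2\dim B}$.
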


\begin{proof}
Denote by   $\nu_d(x)=dx$   the multiplication map on $B$.
We notice that if $y= p_0 +  x \in S = p_0 + B$ is a $d$-torsion point, then $dx=-dp_0$. Hence  $x$ is an element of the fiber $\nu_{d}^{-1} ( -d p_0 )$ which consists of exactly $d^{2\dim B}$ elements.
Conversely, if $x \in \nu_{d}^{-1} ( -d p_0 )$, then $y=p_0+x$ belongs to $S$ and it is trivially  a $d$-torsion point.
\end{proof}


\begin{defn}
The locus $V^i(\sF)$ is said \emph{linear} (\emph{resp.} \emph{torsion linear}) if it consists of  a finite union of  translates (\emph{resp.} torsion translates) of  subtori of $\widehat A$.
\end{defn}

\begin{prop}\label{prop1}
If $V^i(\sF)$ is linear, then for all $d\geq 1$ the following inequalities hold:
\begin{itemize}
\item[(i)] $  \tau_d^i  \; \leq \; r_i \, d^{ 2 v_i  },$
\item[(ii)]  $  \sum_{\alpha \in T_d^i} h^i( X, \sF \otimes f^*\alpha) \; \leq \; M_i \, r_i \, d^{2 v_i  }.$
\end{itemize}
 \end{prop}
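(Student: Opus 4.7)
The plan is to reduce both inequalities to a component-by-component count via Lemma \ref{dtor}, together with the definition of $M_i$.

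First I would write $V^i(\sF) = \bigcup_{j=1}^{r_i} S_j$, where each $S_j = p_j + B_j$ is a translate of a subtorus $B_j \subset \widehat A$ of dimension $\dim B_j \leq v_i$ (using the linearity hypothesis). Then the set of $d$-torsion points of $V^i(\sF)$ is
\[
T_d^i \; = \; \bigcup_{j=1}^{r_i} \big( S_j \cap \widehat A[d] \big),
\]
so by the union bound
\[
\tau_d^i \; \leq \; \sum_{j=1}^{r_i} \bigl| S_j \cap \widehat A[d] \bigr|.
\]
By Lemma \ref{dtor}, each set $S_j \cap \widehat A[d]$ is either empty or has exactly $d^{2 \dim B_j}$ elements, and in either case its cardinality is at most $d^{2 v_i}$. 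Summing over $j$ yields (i):
\[
\tau_d^i \; \leq \; r_i \, d^{2 v_i}.
\]

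For (ii) I would simply observe that every $\alpha \in T_d^i$ belongs to $V^i(\sF)$, hence by the very definition of $M_i$ we have $h^i(X, \sF \otimes f^*\alpha) \leq M_i$. Thus
\[
\sum_{\alpha \in T_d^i} h^i(X, \sF \otimes f^*\alpha) \; \leq \; M_i \cdot \tau_d^i \; \leq \; M_i \, r_i \, d^{2 v_i},
\]
using (i) in the last step.

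There is essentially no obstacle here: the entire content is packaged into Lemma \ref{dtor}, and the rest is bookkeeping over the finitely many irreducible components afforded by the linearity assumption. The only mild subtlety is being careful that $T_d^i$ is defined as the $d$-torsion inside $V^i(\sF)$ (so that each such $\alpha$ automatically sits in some component $S_j$ and contributes at most $M_i$), which is immediate from the definitions given above.
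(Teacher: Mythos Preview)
Your proof is correct and follows essentially the same approach as the paper: apply Lemma \ref{dtor} to each of the $r_i$ components to obtain (i), then bound each summand by $M_i$ and use (i) to get (ii). The paper's version is simply a terser statement of the same argument.
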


\begin{proof}
The  proposition follows by Lemma \ref{dtor} and the following inequalities
\begin{equation}\label{mMformula}
m_i \, \tau_d^i \; \leq  \sum_{\alpha \in T_d^i} h^i(X, \sF \otimes f^*\alpha)  \; \leq \; M_i \,\tau_d^i.
\end{equation}
 \end{proof}

 \begin{prop}\label{proplb}
 If $V^i(\sF)$ is torsion linear and $v_i>0$, then 
 $\tau_d^i \geq d^{2 v_i}$ and 
$ \sum_{\alpha \in T_d^i} h^i( X, \sF \otimes f^*\alpha)  \geq m_i d_i^{2v_i}$ 
  for infinitely many $d\geq 1$.
  \end{prop}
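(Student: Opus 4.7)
The plan is to use the torsion-linearity hypothesis to exhibit, for an arithmetic progression of values of $d$, a full collection of $d$-torsion points in a maximal-dimensional irreducible component of $V^i(\sF)$, and then read off the two inequalities from Lemma \ref{dtor} and \eqref{mMformula}.

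First, since $v_i>0$ and $V^i(\sF)$ is torsion linear, fix an irreducible component $S\subset V^i(\sF)$ of maximum dimension $\dim S = v_i$, written as $S=\beta + T_0$ with $T_0\subset \widehat{A}$ a subtorus and $\beta\in \widehat{A}$ a torsion point. Let $N$ denote the order of $\beta$, so that $N\beta=0$.

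Second, I would restrict attention to the (infinite) subset of integers $d\geq 1$ with $N\mid d$. For any such $d$, the point $\beta\in S$ is itself a $d$-torsion point, so the set of $d$-torsion points of $S$ is non-empty. Lemma \ref{dtor} then applies to $S$ and gives that this set has cardinality exactly $d^{2\dim T_0}=d^{2v_i}$. Since these torsion points of $S$ are in particular torsion points of $V^i(\sF)$, we obtain $\tau_d^i \geq d^{2v_i}$ for every $d$ divisible by $N$, hence for infinitely many $d\geq 1$.

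Third, the inequality for the cohomology sum is then immediate from the left-hand side of \eqref{mMformula}: for the same infinite family of $d$,
\[
\sum_{\alpha\in T_d^i} h^i(X,\sF\otimes f^*\alpha) \;\geq\; m_i\,\tau_d^i \;\geq\; m_i\, d^{2v_i},
\]
which is the second claim (the proposition writes $d_i^{2v_i}$, which I read as a typo for $d^{2v_i}$).

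There is no substantial obstacle here; the only subtlety is to invoke the torsion linearity to guarantee that the chosen maximal-dimensional component $S$ actually contains a torsion point, which is exactly what lets Lemma \ref{dtor} convert non-emptiness of the $d$-torsion locus of $S$ into the precise count $d^{2v_i}$.
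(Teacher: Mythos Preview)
Your proof is correct and follows essentially the same approach as the paper: pick a maximal-dimensional torsion-linear component $S=\beta+T_0$, use the torsion hypothesis on $\beta$ to guarantee that $S$ contains $d$-torsion points for infinitely many $d$, and then apply Lemma \ref{dtor} and \eqref{mMformula}. The paper's version is more terse, but the content is identical.
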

\begin{proof}
If $S$ is a component of dimension $v_i$, then it contains $d$-torsion points for infinitely many $d\geq 1$. The result follows by Lemma \ref{dtor} and \eqref{mMformula}.
\end{proof}
There exist upper bounds on the cardinalities  $\tau_d^i$ even if $V^i(\sF)$ is not linear.
 \begin{prop}\label{remray}
There are  positive constants   $a_1,a_2$ such that for all $d\geq 1$ we have:
 \begin{itemize}
\item[(i)] $ \tau_d^i \; \leq \; a_1 \, d^{2 v_i },$
\item[(ii)] $\sum_{\alpha \in T_d^i} h^i(X, \sF \otimes f^*\alpha) \; \leq \; a_2 \, d^{2 v_i }.$
\end{itemize}
\end{prop}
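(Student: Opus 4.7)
The plan is to reduce the non-linear case to the torsion linear case already handled in Proposition \ref{prop1}, by restricting attention only to the Zariski closure of the torsion points inside $V^i(\sF)$. The key input that makes this reduction possible is the Manin--Mumford conjecture, proved by Raynaud, applied to $V^i(\sF)$ viewed as a closed algebraic subset of the abelian variety $\widehat{A}$.

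More precisely, let $W^i \subset \widehat{A}$ denote the Zariski closure of the set of all torsion points contained in $V^i(\sF)$. By Raynaud's theorem, $W^i$ is a finite union of torsion translates of subtori of $\widehat{A}$, i.e.\ it is torsion linear in the sense of the previous definition. Since $W^i \subset V^i(\sF)$, its number of irreducible components $r(W^i)$ is finite, and the maximum dimension $v(W^i)$ of a component of $W^i$ satisfies $v(W^i) \leq v_i$. Crucially, by construction every torsion point of $V^i(\sF)$ lies in $W^i$, so $T_d^i$ (the set of $d$-torsion points of $V^i(\sF)$) coincides with the set of $d$-torsion points of $W^i$.

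Now I apply Proposition \ref{prop1}(i) to $W^i$ in place of $V^i(\sF)$: this immediately yields $\tau_d^i \leq r(W^i)\, d^{2 v(W^i)} \leq r(W^i)\, d^{2 v_i}$, which proves (i) with $a_1 = r(W^i)$. For (ii), note that the cohomology rank function $\alpha \mapsto h^i(X, \sF\otimes f^*\alpha)$ is upper semicontinuous on $\widehat{A}$, and $V^i(\sF)$ is a Noetherian closed subset, so the maximum $M_i$ is finite and attained. Consequently, for every $\alpha \in T_d^i \subset V^i(\sF)$ we have $h^i(X, \sF\otimes f^*\alpha) \leq M_i$, giving
\[
\sum_{\alpha \in T_d^i} h^i(X, \sF \otimes f^*\alpha) \; \leq \; M_i\, \tau_d^i \; \leq \; M_i\, r(W^i)\, d^{2 v_i},
\]
which proves (ii) with $a_2 = M_i\, r(W^i)$.

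The only nontrivial ingredient is Raynaud's theorem; once that is invoked, the argument is a direct transcription of the proof of Proposition \ref{prop1} applied to $W^i$. The main conceptual point worth flagging is that the constants $a_1, a_2$ so produced are not effective, in contrast to the torsion linear case, because Raynaud's theorem does not provide an explicit bound on the number of components of $W^i$ in terms of the geometry of $V^i(\sF)$.
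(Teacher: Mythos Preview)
Your proof is correct and follows essentially the same approach as the paper: take the Zariski closure $W^i$ of the torsion points in $V^i(\sF)$, invoke Raynaud's theorem (Manin--Mumford) to conclude that $W^i$ is torsion linear with components of dimension at most $v_i$, and then apply the bounds already established in the linear case (Lemma~\ref{dtor} and Proposition~\ref{prop1}). Your write-up is in fact more explicit than the paper's, which leaves the final step (reducing to Proposition~\ref{prop1} and bounding by $M_i$) to the reader.
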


\begin{proof}
We employ the following theorem of Raynaud \cite[p. 327]{Ra83}. Let $Y$ be a closed integral subscheme of a complex abelian variety $V$, and let $T\subset V$ be the set of torsion points. If $T\cap Y$ is dense in $Y$ with respect to the Zariski topology, then $Y$ is a translate of an abelian subvariety by a point of finite order.

Take now the Zariski closure of all the torsion points in $V^i(\sF)$. This is a finite union of irreducible closed subvarieties where 
in each component the torsion points are dense. Hence, by Raynaud's Theorem, each component is a  translate  of an abelian subvariety  of dimension at most $v_i$ by a torsion point.
%
%
%
 \end{proof}
  
 The following theorem is the main result of this section. The equation \eqref{limitcalc} is a generalization of \cite[Theorem 4.1]{Z} in which  the author studies the particular  case of the structure sheaf of a smooth projective variety with respect to the Albanese map.

 \begin{thm}\label{cor1}
If $V^i(\sF)$ is a proper subset of $\widehat A$, 
 then we have 
 \begin{equation}\label{limitcalc}
\lim_{d\to \infty}  \frac{h^i(X_d, \sF_d)}{\deg \varphi_d } \; = \;  0.
\end{equation}
Moreover, if $\sF$ satisfies weak $GV$ with index $p$,  then we have
   $$ \lim_{d\to \infty}  \frac{h^p(X_d, \sF_d)}{\deg \varphi_d}   \; = \;      (-1)^p \chi(\sF).$$ 
 \end{thm}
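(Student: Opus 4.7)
My plan is to reduce the cohomology of $\sF_d$ on $X_d$ to a sum of twisted cohomologies on $X$ indexed by the $d$-torsion characters, and then control that sum using the bounds from Proposition \ref{remray}.

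First I would exploit the fiber product diagram \eqref{fundamental}: since $\varphi_d$ is finite (in fact étale), flat base change gives
\[
\varphi_{d,*}\sF_d \;=\; \sF\otimes \varphi_{d,*}\sO_{X_d} \;=\; \sF\otimes f^*\mu_{d,*}\sO_A.
\]
Because $\mu_d\colon A\to A$ is an étale Galois cover with Galois group the group $A[d]$ of $d$-torsion points, the pushforward decomposes as $\mu_{d,*}\sO_A \simeq \bigoplus_{\alpha\in \widehat A[d]} P_\alpha$, where $P_\alpha$ is the numerically trivial line bundle on $A$ corresponding to the character $\alpha$ of $A[d]$, i.e.\ a $d$-torsion point of $\widehat A$. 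Summing cohomology over the direct summands yields the crucial identity
\[
h^i(X_d,\sF_d) \;=\; \sum_{\alpha\in \widehat A[d]} h^i\!\bigl(X,\sF\otimes f^*\alpha\bigr) \;=\; \sum_{\alpha\in T_d^i} h^i\!\bigl(X,\sF\otimes f^*\alpha\bigr),
\]
where in the last equality I have discarded the characters outside $V^i(\sF)$ (which contribute zero by definition of $V^i(\sF)$).

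For the first statement, since $V^i(\sF)\subsetneq \widehat A$, we have $v_i<g=\dim A$. Proposition \ref{remray}(ii) then supplies a constant $a_2>0$ such that
\[
h^i(X_d,\sF_d) \;\leq\; a_2\, d^{2 v_i}.
\]
Dividing by $\deg\varphi_d = d^{2g}$ gives $h^i(X_d,\sF_d)/\deg\varphi_d \leq a_2\, d^{2(v_i-g)}$, which tends to $0$ as $d\to\infty$ since $v_i-g<0$.

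For the second statement, weak $GV$ with index $p$ means $V^i(\sF)\subsetneq \widehat A$ for every $i\neq p$, so by the first part all the normalized cohomology ranks $h^i(X_d,\sF_d)/\deg\varphi_d$ with $i\neq p$ converge to $0$. Since $\varphi_d$ is étale, $\chi(\sF_d)=\deg\varphi_d\cdot \chi(\sF)$, hence
\[
\chi(\sF) \;=\; \sum_{i\geq 0}(-1)^i\,\frac{h^i(X_d,\sF_d)}{\deg\varphi_d}.
\]
Passing to the limit eliminates every summand except the one with $i=p$, and rearranging yields the desired equality. The only mildly delicate step is the decomposition of $\mu_{d,*}\sO_A$ into characters; everything else is a direct application of Proposition \ref{remray} together with the multiplicativity of $\chi$ under étale covers.
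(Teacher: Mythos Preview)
Your argument is correct and follows essentially the same route as the paper: decompose $\varphi_{d*}\sF_d$ via the projection formula and base change into the direct sum $\bigoplus_{\alpha\in S_d}(\sF\otimes f^*\alpha)$, bound the resulting sum by Proposition~\ref{remray}, and conclude the second statement from the multiplicativity of $\chi$ under \'etale covers. The only cosmetic difference is that the paper separates out the trivial case $V^i(\sF)=\emptyset$ (where $v_i$ is undefined but the sum is zero), which you implicitly absorb into the empty sum over $T_d^i$.
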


\begin{proof}
 Denote by $S_d$ the set of all   $d$-torsion points of $\widehat A$ so that 
$$\mu_{d*}\sO_A \; \simeq \; \bigoplus_{\alpha \in S_d} \alpha$$
(\emph{cf}. \cite[Proof of Theorem 4.1]{Z}).
As both $\mu_d$ and $\varphi_d$ are \'etale morphisms,  there are isomorphisms of complexes $\mathbf{R}\mu_{d*}\sO_A \simeq \mu_{d*} \sO_A$ and $\mathbf{R} \varphi_{d*}\sO_{X_d} \simeq \varphi_{d*} \sO_{X_d}$. Hence, by performing the base change of \cite[Lemma 1.3]{BO} along $f\colon X \to A$, we obtain a further decomposition:
\begin{equation*}
\varphi_{d*}  \sO_{X_d}  \; \simeq \; \bigoplus_{\alpha \in S_d} f^*\alpha.
\end{equation*}
Finally, by the projection formula of \cite[Ex. 8.3]{Ha}, we obtain  the following isomorphisms 
$$\varphi_{d*}\sF_d \, \simeq \, \sF \otimes \varphi_{d*} \sO_{X_d} \, \simeq  \, \bigoplus_{\alpha \in S_d}  \big(\sF \otimes  f^*\alpha \big),$$ so that   
\begin{equation}\label{projform} 
 h^i(X_d,\sF_d) \; = \;  
 h^i(X, \varphi_{d*}\sF_d)  \; = \; 
 \sum_{\alpha \in S_d} h^i(X,\sF\otimes f^*\alpha). 
 \end{equation}
 Hence, if $V^i(\sF)=\emptyset$,  then all summands in the right hand side of \eqref{projform} are equal to  zero.
On the other hand, if $V^i(\sF) \neq \emptyset$, then Proposition \ref{remray} yields
 \begin{equation*}
 \sum_{\alpha \in S_d} h^i(X,\sF\otimes f^*\alpha) \;   \leq \; a_i\,  \, d^{2 v_i}
\end{equation*}
for some positive constants $a_i$ which are independent of $d$.
This proves the first claim as  $\deg \varphi_d  = \deg \mu_d = d^{2g}$ and $v_i<g$. 

In order to prove the second claim, we recall  that the Euler characteristic $\chi(\sF)$  is multiplicative under \'etale covers \cite[Proposition 1.1.28]{Laz1}, \emph{i.e.},
$\chi(\sF_d) \, = \, (\deg \varphi_d ) \, \chi(\sF)$. Therefore an application of \eqref{limitcalc} gives
$$\chi(\sF)  \; = \;  \lim_{d\to \infty} \frac{\chi(\sF_d)}{ \deg \varphi_d  } \; = \; \lim_{d\to \infty} (-1)^p  \frac{  h^{p}(X_d, \sF_d)}{ \deg \varphi_d }.$$
 \end{proof}

\begin{rem}\label{ftit}
By Corollary \ref{lemchi}, the   Euler characteristic of a weak $GV$-sheaf with index $p$  satisfies $\chi(\sF) = (-1)^p h^p(X,\sF\otimes f^*\alpha)$, for some line bundle $\alpha$ generic in $\widehat A$. Therefore, if $h^p(X,\sF)$ assumes the least (or generic) value  in  the  set $\{ h^p(X,\sF\otimes f^*\alpha) \, | \, \alpha \in \widehat A \}$, then the computation of $\chi(\sF)$ simplifies to  $$\chi(\sF) \; = \; (-1)^p \, h^p(X,\sF).$$ This is the case if the sheaf $\sF$ satisfies the \emph{Index Theorem with index} $p$ (or $I.T.$ for short), namely that  $V^i(\sF) = \emptyset$ for all $i\neq p$. In fact, by the invariance of the Euler characteristic, it follows  that 
$h^p(X,\sF \otimes f^*\alpha)$ is independent on $\alpha$ and $V^p(\sF) = \widehat A$. 
\end{rem}

\begin{ex}
By Mumford's Index Theorem \cite[\S 16]{Mu08}, any non-degenerate line bundle $L$ on $A$ satisfies the Index Theorem with index $p$, for some $p\in [0, g = \dim A]$ (see Remark \ref{ftit}; moreover note that $p=0$ if and only if $L$ is ample). Therefore, by taking $f={\rm id_A}$, we have
$$\lim_{d\to \infty} \frac{h^p(A,L_d)}{ \deg \mu_d } \; = \; (-1)^p\chi(L) \; = \; (-1)^p\frac{\big( L^g \big) }{g!}.$$
There are examples of  higher rank vector bundles that satisfy the $I.T.$ condition as well, for instance, the class of  non-degenerate simple semi-homogeneous vector bundles on an abelian variety  (\emph{cf}. \cite[Proposition 2.1]{Gr14}).

 
\end{ex}

\section{Limits of Normalized Hodge and Betti Numbers}\label{secbetti}

We denote by  $$h^{p,q}(X) \; = \; \dim_{\C} H^q(X,\Omega_X^p)$$ the Hodge numbers of a smooth projective variety $X$, and  by 
\begin{equation}\label{bettideco}
b_k(X) \; = \; \sum_{p+q=k} h^{p,q}(X)
\end{equation}
  its Betti numbers.

\begin{prop}\label{corbetti}
Let $X$ be a smooth projective variety of dimension $n$ that satisfies the weak generic Nakano vanishing theorem. Then we have
	\[ 
	\lim_{d\to \infty} \frac{h^{p,q}(X_d)}{\deg \varphi_d } \; = \; 
	\begin{cases} 
	(-1)^q \chi(\Omega_X^p)  &\mbox{if }\quad    p+q =  n  \\
	0 &  \mbox{if }\quad   p+q \neq n
	\end{cases}
	\]
	and 
	
	\[ \lim_{d\to \infty} \frac{b_k(X_d)}{\deg \varphi_d } \; = \; 
	\begin{cases} 
	(-1)^n \chi_{\rm top}(X)  &\mbox{if }\quad    k =  n  \\
	0 & \mbox{if }\quad   k  \neq n.
	\end{cases}
	\]
\end{prop}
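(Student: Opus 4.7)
The plan is to obtain both statements as an immediate application of Theorem \ref{cor1} to the coherent sheaves $\sF = \Omega_X^p$ with respect to the Albanese morphism $f = a_X\colon X \to \Alb(X)$. By Definition \ref{weaknakano}, the hypothesis says precisely that each $\Omega_X^p$ is a weak $GV$-sheaf of index $n-p$, i.e.\ $V^i(\Omega_X^p) \subsetneq \Pic^0(\Alb(X))$ for every $i \neq n-p$. Moreover, since $\varphi_d$ is étale, $\varphi_d^* \Omega_X^p \cong \Omega_{X_d}^p$, and therefore $h^{p,q}(X_d) = h^q(X_d, (\Omega_X^p)_d)$.

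With this setup the Hodge claim is immediate. Fix a pair $(p,q) \in [0,n]^2$. If $p+q \neq n$, then $q \neq n-p$, so $V^q(\Omega_X^p)$ is a proper subset of $\Pic^0(\Alb(X))$ and the first part of Theorem \ref{cor1} yields $\lim_d h^{p,q}(X_d)/\deg \varphi_d = 0$. If instead $p+q = n$, then $q = n-p$ matches the weak $GV$ index of $\Omega_X^p$, and the second part of Theorem \ref{cor1} gives
$$\lim_{d \to \infty} \frac{h^{p,q}(X_d)}{\deg \varphi_d} \; = \; (-1)^{n-p}\chi(\Omega_X^p) \; = \; (-1)^q \chi(\Omega_X^p).$$

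For the Betti statement I would invoke the Hodge decomposition \eqref{bettideco}, namely $b_k(X_d) = \sum_{p+q=k} h^{p,q}(X_d)$, and exchange the limit with the finite sum. For $k \neq n$ every Hodge summand contributes zero by the previous paragraph. For $k = n$ the same paragraph yields
$$\lim_{d \to \infty} \frac{b_n(X_d)}{\deg \varphi_d} \; = \; \sum_{p=0}^n (-1)^{n-p}\chi(\Omega_X^p) \; = \; (-1)^n \sum_{p=0}^n (-1)^p \chi(\Omega_X^p) \; = \; (-1)^n \chi_{\rm top}(X),$$
where the final equality is the classical identity $\chi_{\rm top}(X) = \sum_p (-1)^p \chi(\Omega_X^p)$, itself a direct consequence of $\chi(\Omega_X^p) = \sum_q (-1)^q h^{p,q}(X)$ combined with $\chi_{\rm top}(X) = \sum_k (-1)^k b_k(X)$ and \eqref{bettideco}.

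There is no real obstacle: once Theorem \ref{cor1} and Definition \ref{weaknakano} are in place, the proof is essentially bookkeeping. The only minor points to verify are the isomorphism $\varphi_d^* \Omega_X^p \cong \Omega_{X_d}^p$ (immediate from étaleness) and the standard Hodge-theoretic identity rewriting $\chi_{\rm top}(X)$ as an alternating sum of holomorphic Euler characteristics of the $\Omega_X^p$.
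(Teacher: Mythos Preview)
Your proof is correct and follows essentially the same approach as the paper: apply Theorem \ref{cor1} to $\sF = \Omega_X^p$ using the weak $GV$ hypothesis from Definition \ref{weaknakano}, then deduce the Betti statement from the Hodge decomposition \eqref{bettideco} together with the identity $\sum_{p}(-1)^{n-p}\chi(\Omega_X^p) = (-1)^n\chi_{\rm top}(X)$. The paper's proof is terser, but the logic is identical.
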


\begin{proof}
Since $h^{p,q}(X_d) = h^q(X_d, \Omega_{X_d}^p) = h^q(X_d, \varphi_d^* \Omega_X^p)$,  the first statement is an application of Theorem \ref{cor1}.
For $k\neq n$, the second statement follows by the first statement and the equations \eqref{bettideco}. 
For $k=n$, we further observe that
$$\sum_{p=0}^n	 (-1)^{n-p} \chi(\Omega_X^p ) \; =\;  (-1)^n \chi_{\rm top}(X).$$
	 
	%
\end{proof}


\begin{rem}
	Proposition \ref{corbetti} may fail if  the Albanese map is only generically finite onto its image, but not semismall (\emph{cf}. \cite[Remark on p. 6]{JZ}).
	A counterexample is provided by the construction in \cite[\S 3]{gl:gv1} (or \cite[Example 9.1]{Sch}) which we here briefly recall. Let $A$ be an abelian variety of dimension four and let $a_X \colon X \to A$ be the blowup of $A$ along a smooth curve $C \subset A$ of genus $g(C)\geq 2$ with 
	exceptional divisor $E$. Hence $a_X$ is the Albanese map of $X$ and $\delta(a_X)=1$ (see \cite[Example 12.3]{PS13}). By means of the exact sequence $0\to a_X^* \Omega_A \to \Omega_X \to \Omega_{E/C} \to 0$ and the Leray spectral sequence, we deduce that
$$ V^i(\Omega_X) \; = \;  \{\sO_X\}, \quad V^2(\Omega_X) \;  = \; \widehat A, \quad V^3(\Omega_X) \subseteq \{ \sO_X\}, \quad i=0,1,4. $$	
		Hence $\Omega_X$ satisfies weak $GV$ with index $2$, and,  by Theorem \ref{cor1}, we find
	$$\lim_{d\to \infty} \frac{h^{1,2}(X_d)}{ \deg \varphi_d } \; = \; \chi(\Omega_X) \; = \; \chi(\Omega_{E/C}) \; = \;   g(C) -1 \; \neq \; 0.$$
	%
	Moreover, as the loci $V^3(\sO_X) \simeq V^0(\Omega_X^3) \simeq V^1(\omega_X)$  are of codimension at least one (see \cite[Theorem 1]{gl:gv1}), by Theorem \ref{cor1} we have that also the following limit  
	$$\lim_{d\to \infty} \frac{ b_3(X_d) }{ \deg \varphi_d } \; = \; \lim_{d\to \infty} \frac{\sum_{p+q=3} h^{p,q}(X_d)}{\deg \varphi_d}  \; = \; 2 \, \lim_{d\to \infty}
	\frac{h^{1,2}(X_d)}{ \deg \varphi_d } \; = \; 2g(C)-2	$$ is non-zero.
\end{rem}

Now we prove Theorem \ref{main1} of the Introduction. The theorem is a special case of the following more general result, where all the values of 
the defect of semismallness of the Albanese map $\delta(a_X)$ are taken in consideration (see \eqref{defect}). Theorem \ref{main1} is the case $\delta(a_X)=0$. 
First of all, we note  that  the limits \eqref{limhodge} are peculiar to the case $\delta(a_X)=0$, 
and they have   been essentially proved in Proposition \ref{corbetti}.

\begin{thm}\label{thmdefect}
	Let $X$ be a smooth projective variety of complex dimension $n$,  and let $\varphi_d \colon X_d \to X$ be the \'etale covers defined in 	\eqref{diagrintro}. If  the   defect of semismallness of the Albanese map satisfies $\delta(a_X) \leq N$,  then
	for any pair of integers $(p,q) \in  [0,n]^2$ there exists a positive constant $B(p,q)$ such that 
	\begin{equation}\label{degreeh2}
	\frac{h^{p,q}(X_d)}{\deg \varphi_d } \; \leq \; B(p,q) \, d^{ \, - 2 (  \abs{ n-p-q } - N )  } \quad \mbox{for all} \quad d\geq 1.
	\end{equation}
Conversely, if $N\geq 0$ is an integer and  $X$ is a smooth projective variety of dimension $n$ that satisfies both  $\dim \Alb(X) > n$ 
and the bounds in \eqref{degreeh2} for all pairs of indexes $(p,q)\in [0,n]^2$, then the defect of semismallness satisfies $\delta(a_X) \leq N$.
	\end{thm}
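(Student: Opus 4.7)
The proof has two implications. For the forward direction, I would combine the étale base-change decomposition underlying Theorem \ref{cor1} with the Popa--Schnell codimension estimate of Theorem \ref{psthm}. The isomorphism $\varphi_{d*}\Omega_{X_d}^p \simeq \bigoplus_{\alpha \in S_d} \Omega_X^p \otimes a_X^*\alpha$ reduces the computation to
$$h^{p,q}(X_d) \;=\; \sum_{\alpha \in T_d^q} h^q(X,\Omega_X^p \otimes a_X^*\alpha),$$
where $S_d \subset \Pic^0(X)$ is the group of $d$-torsion line bundles and $T_d^q \subseteq S_d$ is the subset lying on $V^q(\Omega_X^p)$. Proposition \ref{remray}(ii) bounds this sum by a constant multiple of $d^{2 v_q^p}$, with $v_q^p$ the maximum dimension of an irreducible component of $V^q(\Omega_X^p)$. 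Theorem \ref{psthm} together with $\delta(a_X) \leq N$ gives $v_q^p \leq g - |n-p-q| + N$ whenever $|n-p-q| > N$, and $v_q^p \leq g$ otherwise. Dividing by $\deg \varphi_d = d^{2g}$ yields \eqref{degreeh2} in the decaying range; in the non-decaying range $|n-p-q| \leq N$ the right-hand side of \eqref{degreeh2} is bounded below by $B(p,q)$, so any uniform bound on the normalized Hodge number suffices after enlarging the constant.

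For the converse I would argue by contradiction and assume $\delta(a_X) > N$. The equality clause in Theorem \ref{psthm} provides a pair $(p_0,q_0)$ with
$$\codim_{\Pic^0(X)} V^{q_0}(\Omega_X^{p_0}) \;=\; |n-p_0-q_0| - \delta(a_X),$$
so that the maximum component dimension is $v_0 = g - |n-p_0-q_0| + \delta(a_X)$. The assumption $g = \dim \Alb(X) > n$ forces $v_0 \geq g - n \geq 1$, so $V^{q_0}(\Omega_X^{p_0})$ admits a positive-dimensional component. Provided such a component is a torsion translate of a subtorus, Proposition \ref{proplb} applies and yields
$$\frac{h^{p_0,q_0}(X_d)}{\deg \varphi_d} \;\geq\; m_{q_0}\, d^{2(\delta(a_X)-|n-p_0-q_0|)}$$
for infinitely many $d$. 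Matching this against the assumed upper bound of the form $B(p_0,q_0)\, d^{2(N-|n-p_0-q_0|)}$ gives $d^{2(\delta(a_X)-N)} \leq B(p_0,q_0)/m_{q_0}$ for infinitely many $d$, contradicting $\delta(a_X) > N$.

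The main obstacle in this plan is justifying the torsion-linear structure of the relevant component of $V^{q_0}(\Omega_X^{p_0})$ needed for Proposition \ref{proplb}. For $\omega_X$ this is the classical Green--Lazarsfeld--Simpson theorem, but for arbitrary $\Omega_X^p$ one needs the analogous structure result, which should be accessible from the mixed Hodge module framework of Popa--Schnell already underpinning Theorem \ref{psthm}. Alternatively, Raynaud's theorem (as invoked in the proof of Proposition \ref{remray}) applied to the Zariski closure of the torsion points of $V^{q_0}(\Omega_X^{p_0})$ produces torsion translates of subtori, and a dimension-counting argument shows that at least one such translate realizes the full dimension $v_0$; this weaker local torsion-linear structure is already enough to run the lower-bound estimate above.
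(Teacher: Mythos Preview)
Your argument matches the paper's proof almost exactly: the forward direction via the étale decomposition plus the Popa--Schnell codimension bound, and the converse by contradiction using the equality clause of Theorem~\ref{psthm} together with the lower bound of Proposition~\ref{proplb}. The paper resolves your ``main obstacle'' by invoking \cite[Corollary 19.2]{Sch}, which states that the loci $V^q(\Omega_X^p)$ are torsion linear in general; this is precisely the structure result from the Popa--Schnell mixed Hodge module framework you allude to, so your first proposed resolution is the correct one.

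Your Raynaud alternative, however, does not work as stated. Raynaud's theorem applied to the Zariski closure of the torsion points in $V^{q_0}(\Omega_X^{p_0})$ tells you only that \emph{this closure} is a finite union of torsion translates of subtori; it says nothing about its dimension relative to $v_0$. Absent prior knowledge that $V^{q_0}(\Omega_X^{p_0})$ is itself linear, there is no reason for torsion points to be Zariski dense in a top-dimensional component, so no ``dimension-counting argument'' forces one of the Raynaud pieces to have dimension $v_0$. You genuinely need the structure theorem here. A minor related point: in the forward direction the paper invokes Proposition~\ref{prop1} (which already presupposes linearity) rather than Proposition~\ref{remray}, so the torsion-linear structure of the $V^q(\Omega_X^p)$ is implicitly used in both directions.
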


\begin{proof}
Let $S_d$ denote the set of $d$-torsion points on $\Alb(X)$. As in \eqref{projform}, we have for all $p$ and $q$ the following equalities
$$h^{p,q}(X_d) \; = \; \sum_{\alpha \in S_d} h^q(X,\Omega_X^p\otimes \alpha).$$ 
By Proposition \ref{prop1}, there exist  positive constants $B = B(p,q)$ such that 
$$\frac{h^{p,q}(X_d)}{ \deg \varphi_d } \; \leq \; B \, d^{2 (\dim V^q(\Omega_X^p) - g )}$$
where $g = \dim \Alb(X)$. 
Moreover,  by Theorem \ref{psthm}, we have $\dim V^q(\Omega_X^p) \leq g - \abs{p+q-n} + \delta(a_X)$ and 
$$\frac{h^{p,q}(X_d)}{ \deg \varphi_d } \; \leq \; B \, d^{-2( \abs{p+q-n} - \delta(a_X) ) }$$ for all $d\geq 1$. This shows one implication.

Assume now that $\dim \Alb(X) > n$ and that the bounds \eqref{degreeh2} hold.
Moreover, assume by contradiction that $\delta(a_X) \geq N +1$. By Theorem \ref{psthm}, there exists a pair
$(p_0,q_0) \in [0,n]^2$ such that $\codim V^{q_0} ( \Omega_X^{p_0} ) =  \abs{ n - p_0 - q_0 } - \delta(a_X)$. 
Then $\dim V^{q_0}(\Omega_X^{p_0}) = \dim \Alb(X) - \abs{n-p_0-q_0} +\delta(a_X) >0$, and by
Proposition \ref{proplb} we have 
$$ \frac{h^{p_0,q_0}(X_d)}{\deg \varphi_d} \; \geq \; A \, d^{-2 ( \abs{n-p_0-q_0} - \delta(a_X) ) } \quad \mbox{ for infinitely many  } \quad d\geq 1$$
for some positive  constant $A$ independent of $d$
(note that the loci $V^q(\Omega_X^p)$ are torsion linear by \cite[Corollary 19.2]{Sch}).
For $d\gg 0$, this contradicts the bounds  \eqref{degreeh2} when $(p,q)=(p_0,q_0)$.
\end{proof}

\begin{proof}[Proof of Corollary \ref{maincor1}.]
The first statement of the corollary is an application of Theorem \ref{main1} and \eqref{bettideco}. On the other hand, the second point is again Proposition \ref{corbetti}.
\end{proof}

\section{Limits of Normalized Plurigenera}\label{secplurigenera}

In this subsection, we apply Theorem \ref{cor1} to the pluricanonical bundles $\omega_X^{\otimes m}$ ($m\geq 1$) of a smooth projective variety $X$. 
We set $p_g(X) = P_1(X) = h^0(X,\omega_X)$ for the geometric genus of $X$, and 
$$P_m(X) \; = \;  h^0(X,\omega^{\otimes m}_X), \quad m\geq 2,$$ for the plurigenera.

In the following proposition we fix a morphism $f\colon X \to A$ to an abelian variety.
\begin{prop}\label{corpm} Let $X_d$ be the fiber product between $f\colon X \to A$ and $\mu_d$ as  in the commutative diagram \eqref{fundamental}.
Then for any integer $m\geq 1$ we have 
	\begin{equation}\label{limpluri} 
	\lim_{d\to \infty} \frac{P_m(X_d)}{\deg \varphi_d} \; = \;   \chi(f_*\omega_X^{\otimes m}).
	\end{equation}
	Moreover, if $f\colon X \to A$ is generically finite onto its image, then 
	$$\lim_{d\to \infty} \frac{ p_g(X_d) }{\deg \varphi_d} \; = \;   \chi(\omega_X).$$
	 
\end{prop}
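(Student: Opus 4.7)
The plan is to transfer the computation of $P_m(X_d)$ to a statement about a coherent sheaf on the abelian variety $A$, so that Theorem~\ref{cor1} can be invoked directly. Let $\sG = f_*\omega_X^{\otimes m}$. Since $\varphi_d$ is \'etale we have $\omega_{X_d}^{\otimes m}\cong\varphi_d^*\omega_X^{\otimes m}$, and flat base change along the cartesian square \eqref{fundamental} (with $\mu_d$ \'etale, hence flat) gives $(f_d)_*\omega_{X_d}^{\otimes m}\cong\mu_d^*\sG$. Taking global sections on $A$ and using $\deg\varphi_d=\deg\mu_d$, this yields
\[
\frac{P_m(X_d)}{\deg\varphi_d} \; = \; \frac{h^0(A,\mu_d^*\sG)}{\deg\mu_d}.
\]
Thus the first claim would be exactly the conclusion of Theorem~\ref{cor1} applied to the sheaf $\sG$ on $A$ with respect to the identity morphism ${\rm id}_A\colon A\to A$, provided $\sG$ is a weak $GV$-sheaf with index $0$.

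The essential input is therefore the generic vanishing statement $V^i(f_*\omega_X^{\otimes m})\subsetneq\widehat A$ for all $i>0$ and every $m\geq 1$. For $m=1$ this is the classical theorem of Green--Lazarsfeld and Hacon. For $m\geq 2$ one can appeal to the stronger results of Popa--Schnell \cite{PS13}, obtained via Saito's theory of mixed Hodge modules, which in fact realize $f_*\omega_X^{\otimes m}$ as a full $GV$-sheaf on $A$. Granting this input, Theorem~\ref{cor1} gives $\lim P_m(X_d)/\deg\varphi_d=(-1)^0\,\chi(\sG)=\chi(f_*\omega_X^{\otimes m})$, establishing the first assertion.

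For the second statement I would specialize to $m=1$, so that the first part already gives $\lim p_g(X_d)/\deg\varphi_d=\chi(f_*\omega_X)$. When $f$ is generically finite onto its image, Grauert--Riemenschneider vanishing forces $R^i f_*\omega_X=0$ for all $i>0$, and the Leray spectral sequence collapses in the relevant range to yield $\chi(\omega_X)=\chi(f_*\omega_X)$, as required.

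The main obstacle in this plan is identifying the correct generic vanishing input for $f_*\omega_X^{\otimes m}$ when $m\geq 2$: unlike the $m=1$ case, no elementary argument seems available, and one must appeal to the Hodge-theoretic machinery behind \cite{PS13}. Once this is in hand, the remaining steps---flat base change, Theorem~\ref{cor1}, and Grauert--Riemenschneider---are essentially formal.
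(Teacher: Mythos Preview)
Your proof is correct and follows essentially the same route as the paper: push the problem down to $A$ via base change, identify $P_m(X_d)=h^0(A,\mu_d^*f_*\omega_X^{\otimes m})$, invoke Theorem~\ref{cor1} with the $GV$ property of $f_*\omega_X^{\otimes m}$, and conclude the second part by Grauert--Riemenschneider. The only correction is bibliographic: the $GV$ property of $f_*\omega_X^{\otimes m}$ for $m\geq 2$ is \cite[Theorem~1.10]{popa+schnell:direct} (Popa--Schnell, \emph{On direct images of pluricanonical bundles}), not \cite{PS13}---the latter treats the Nakano-type statements for $\Omega_X^p$ rather than pluricanonical pushforwards.
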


\begin{proof}
	By \cite[Theorem 1.10]{popa+schnell:direct} the sheaves $f_*\omega_X^{\otimes m}$ satisfy $GV$ for all $m\geq 1$.
	Hence, by Theorem \ref{cor1}, 	we have 
	$$\lim_{d\to \infty} \frac{ h^0(A, \mu_d^* f_* \omega^{\otimes m}_X ) }{\deg \varphi_d}  \; = \; \chi(f_*\omega_X^{\otimes m}).$$
	We observe that by base change,  together with the fact that $h^0(A,f_{d*} \sG) = h^0(X_d,\sG)$ for any coherent sheaf $\sG$ on $X_d$, we have the equalities
	$$h^0(X_d, \omega_{X_d}^{\otimes m}) \; = \; h^0(X_d, \varphi_d^* \omega_X^{\otimes m}) \; = \; h^0(A, f_{d*} \varphi_d^* \omega_X^{\otimes m})  \; = 
	\; h^0(A, \mu_d^* f_*\omega_X^{\otimes m}).$$
	
	The second  claimed limit follows by the  Grauert--Riemenschneider  Vanishing  \cite[Theorem 4.3.9]{Laz1}, which yields $\chi(f_*\omega_X)=\chi(\omega_X)$ if $f$ is generically finite onto its image.  
\end{proof} 

For $m\geq 2$ there are two cases where one can improve the results of Proposition \ref{corpm}.
The first is the case of smooth projective varieties of general type. Indeed, Koll\'ar in  \cite[Proposition 9.4]{KP} shows the multiplicativity of the higher plurigenera under any \'etale map, so that $\frac{ P_m(X_d) }{ \deg \varphi_d }$ are constants and trivially
$$ \lim_{d\to \infty} \frac{ P_m(X_d) }{ \deg \varphi_d } \; = \; P_m(X) \quad \mbox{for all}\quad m\geq 2.$$
The second is the case of    the Albanese map $f = a_X \colon X \to \Alb(X)$.
With a slight abuse of notation, we denote by $I\colon X \to Z$ a non-singular representative of the Iitaka fibration of 
$X$. Moreover, we  set 
$$q(I) \;=\; q(X)-q(Z) \; = \;  h^0(X,\Omega_X) - h^0(Z,\Omega_Z)$$ for the difference of the irregularities.

\begin{prop}\label{plurinu}
	Let $X_d$ be the fiber product between $a_X$ and $\mu_d$ as in \eqref{diagrintro}, and fix an integer $m\geq 2$. 	Then  there exists a positive  constant $M$ such that 
	$$\frac{P_m(X_d)}{\deg \varphi_d}   \; \leq \; M \, d^{-2\,q(I)}\quad \mbox{ for all } \quad d\geq 1.$$
	Moreover we have
	\[ \lim_{d\to \infty} \frac{P_m(X_d)}{\deg \varphi_d}   \; = \; 
	\begin{cases} 
	P_m(X)  & \mbox{if }\quad   q(I)=0 \\
	0 &  \mbox{if } \quad  q(I)>0.
	\end{cases}
	\]
\end{prop}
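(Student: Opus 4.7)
The plan is to reduce $P_m(X_d)$ to a sum over torsion characters on $\Alb(X)$, then exploit the $GV$ property of the pushforward $a_{X*}\omega_X^{\otimes m}$ (established in \cite{popa+schnell:direct}) together with a dimension bound on its non-vanishing locus coming from the Iitaka fibration.

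First, exactly as in the proof of Proposition \ref{corpm}, base change rewrites
\[
P_m(X_d) \;=\; h^0\!\bigl(\Alb(X),\,\mu_d^* a_{X*}\omega_X^{\otimes m}\bigr) \;=\; \sum_{\alpha\in S_d} h^0\!\bigl(\Alb(X),\, a_{X*}\omega_X^{\otimes m}\otimes \alpha\bigr),
\]
where $S_d\subset \Pic^0(\Alb(X)) = \Pic^0(X)$ is the set of $d$-torsion characters. By \cite{popa+schnell:direct} the sheaf $a_{X*}\omega_X^{\otimes m}$ is $GV$, and its non-vanishing locus $V^0(a_{X*}\omega_X^{\otimes m})$ is torsion linear by Simpson-type arguments.

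Second, the key geometric input is that every irreducible component of $V^0(a_{X*}\omega_X^{\otimes m})$ has dimension at most $g-q(I)$, where $g=\dim \Alb(X)$. Heuristically: the Iitaka fibration $I\colon X\to Z$ induces a surjection $\Alb(X)\twoheadrightarrow \Alb(Z)$ with connected kernel of dimension $q(I)$, dualizing to a subtorus $\widehat{\Alb(Z)}\hookrightarrow \Pic^0(\Alb(X))$ of dimension $g-q(I)$; pluricanonical sections of $a_{X*}\omega_X^{\otimes m}\otimes \alpha$ can only exist when $\alpha$ is pulled back (up to torsion translate) from this subtorus. Rigorously, the bound follows from the Chen--Jiang type decomposition $a_{X*}\omega_X^{\otimes m}\simeq \bigoplus_i p_i^*\sG_i\otimes \beta_i$, in which each $B_i$ is a quotient of $\Alb(X)$ of dimension at most $g-q(I)$.

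Third, Proposition \ref{prop1}(ii) applied to $V^0(a_{X*}\omega_X^{\otimes m})$ with $v_0\leq g-q(I)$ yields $P_m(X_d)\leq C\, d^{2(g-q(I))}$ for a constant $C$ independent of $d$; dividing by $\deg\varphi_d=d^{2g}$ gives the asserted polynomial estimate. For $q(I)>0$ this forces $\lim = 0$. For $q(I)=0$, Theorem \ref{cor1} applied to the $GV$-sheaf $a_{X*}\omega_X^{\otimes m}$ on $\Alb(X)$ (with $f=\mathrm{id}_{\Alb(X)}$) gives $\lim = \chi(a_{X*}\omega_X^{\otimes m})$, and the identification $\chi(a_{X*}\omega_X^{\otimes m}) = P_m(X)$ reduces to the vanishing of $H^i(\Alb(X), a_{X*}\omega_X^{\otimes m})$ for $i\geq 1$, which follows from the stronger $M$-regularity that the Chen--Jiang decomposition produces when all $B_i=\Alb(X)$, combined with Koll\'ar-type higher direct image vanishing.

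The main obstacle is the dimension bound $v_0\leq g-q(I)$ on $V^0(a_{X*}\omega_X^{\otimes m})$: while the Iitaka-fibration heuristic is transparent, its rigorous derivation rests on a nontrivial structural result (Chen--Jiang type decomposition of pluricanonical pushforwards, or an equivalent relative generic vanishing argument along $I$). The ancillary identification $\chi(a_{X*}\omega_X^{\otimes m}) = P_m(X)$ when $q(I)=0$ is more standard but still requires care outside the general-type setting.
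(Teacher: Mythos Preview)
Your approach is essentially the one the paper takes: decompose $P_m(X_d)$ as a sum over $d$-torsion characters, bound the sum via Proposition~\ref{prop1} once you know $\dim V^0(\omega_X^{\otimes m})\leq g-q(I)$, and for $q(I)=0$ invoke Proposition~\ref{corpm} to identify the limit with $\chi(a_{X*}\omega_X^{\otimes m})$. The difference lies in how the two key inputs are secured.

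For the dimension bound that you flag as the ``main obstacle'', the paper does not go through a Chen--Jiang decomposition at all; it simply quotes \cite[Theorem~11.2(b)]{HPS}, which gives the exact description $V^0(\omega_X^{\otimes m})=\bigcup_j(\alpha_j+\Pic^0(Z))$ with the $\alpha_j$ torsion. This both yields $v_0=q(Z)=g-q(I)$ and shows the locus is torsion linear, so Proposition~\ref{prop1} applies directly. Your Chen--Jiang route would also work, but it is heavier machinery than needed here.

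For the identification $\chi(a_{X*}\omega_X^{\otimes m})=P_m(X)$ when $q(I)=0$, your argument has a small imprecision: $M$-regularity alone does \emph{not} force $H^i(\Alb(X),a_{X*}\omega_X^{\otimes m})=0$ for $i\geq 1$ at the origin, and the mention of ``Koll\'ar-type higher direct image vanishing'' is a red herring (no higher direct images enter, since $H^0$ always commutes with pushforward). The paper instead uses Lemma~\ref{lemchi} to write $\chi(a_{X*}\omega_X^{\otimes m})=h^0(X,\omega_X^{\otimes m}\otimes\alpha)$ for generic $\alpha$, and then invokes \cite[Theorem~11.2(a)]{HPS}, which says precisely that $h^0(X,\omega_X^{\otimes m}\otimes\alpha)$ is \emph{independent of} $\alpha$ when $q(I)=0$; taking $\alpha=\sO_X$ gives $P_m(X)$. (Alternatively, one may cite \cite[Corollary~12.2]{LPS} to get that $a_{X*}\omega_X^{\otimes m}$ is $I.T.$ with index $0$ when $q(I)=0$, which is the correct strengthening of $M$-regularity you were reaching for.)
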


\begin{proof}
By \cite[Theorem 11.2 (b)]{HPS}, for  each $m\geq 2$ there exist line bundles  $\alpha_1, \ldots , \alpha_t \in \Pic^0(X)$ of finite order such that
	$$V^0(\omega_X^{\otimes m} ) \; = \; \bigcup_{j=1}^t \big(\alpha_j + \Pic^0(Z) \big).$$   
	Hence we have $\dim V^0(\omega_X^{\otimes m}) = q(Z)$.
Moreover, as $P_m(X_d) = \sum_{\alpha \in S_d} h^0(X, \omega_X^{\otimes m} \otimes \alpha)$ (where $S_d$ is the set of $d$-torsion points on $\Alb(X)$),   
by Proposition \ref{prop1} there exists a positive constant $M>0$ such that   the first claim holds.
This also shows that the limit $\lim_{d\to \infty} \frac{P_m(X_d)}{\deg \varphi_d}$ vanishes  for $q(I)>0$.
In order to complete the proof, thanks to  Proposition \ref{corpm}, 
we only need to calculate  the Euler characteristic $\chi(a_{X*}\omega_X^{\otimes m})$.  As $a_{X*}\omega_X^{\otimes m}$ satisfies $GV$, by Lemma \ref{lemchi} we find that 
\begin{equation}\label{eqchi}	
	\chi(a_{X*}\omega_X^{\otimes m}) = h^0 (\Alb(X), a_{X*}\omega_X^{\otimes m} \otimes \alpha) = h^0(X, \omega_X^{\otimes m} \otimes \alpha)
\end{equation}	
	 for  a generic element $\alpha \in \Pic^0(\Alb(X)) \simeq \Pic^0(X)$. However, 
	 if $q(I)=0$, then  by  \cite[Theorem 11.2 (a)]{HPS} we have $V^0(\omega_X^{\otimes m}) = \Pic^0(X)$ and moreover the quantities $h^0(X,\omega_X^{\otimes m} \otimes \alpha)$ are independent of $\alpha$.
	 
\end{proof}

\begin{rem}
Smooth projective varieties of general type fall within the class $q(I)=0$.	Instances of  varieties with $q(I)=0$, but which are   not of general type,  are provided by non-isotrivial elliptic surfaces fibered over smooth projective curves $\Sigma_g$ of genus $g\geq 2$. Indeed, given an elliptic surface $p \colon X \to \Sigma_g$, one can show that the corresponding morphism $P\colon \Alb(X)\to \Alb(\Sigma_g)$ is an isomorphism if and only if the elliptic fibration is \emph{not} isotrivial. For more details, we refer to \cite[Chapter IX]{Bea}. Higher-dimensional examples may be constructed in the same fashion.
\end{rem}

In analogy to Koll\'ar's result \cite[Proposition 9.4]{KP},  the previous proposition suggests that the higher plurigenera ought to be multiplicative under \'etale morphisms  also in the more general case $q(I)=0$. We confirm this expectation for the \'etale covers induced via base change by the isogenies of $\Alb(X)$. In \cite[Corollary 12.2]{LPS}, the reader may notice a further property that shows how varieties with $q(I) = 0$ behave like varieties of general type. Indeed, the sheaves  $a_{X*}\omega_X^{\otimes m}$ satisfy $I.T.$ with index $0$  for all $m\geq 2$ as soon as  $q(I)=0$ (\emph{cf}. Remark \ref{ftit}).

\begin{thm}\label{multiplication}
Let $X$ be a smooth projective variety with $q(I)=0$, and let $Y$ be the fiber product between $a_X$ and an isogeny $\mu \colon B \to \Alb(X)$, as in the following cartesian diagram:
	\begin{equation*}
	\centerline{ \xymatrix@=32pt{
				Y \ar[r]^{\widetilde a \,\,\,} \ar[d]^{\varphi} & B \ar[d]^{\mu}  \\
				X \ar[r]^{a_X\,\,\,\,\,}  & \Alb(X).}}
	\end{equation*}
Then for all $d\geq 1$ and $m\geq 2$  we have  $P_m(Y) = (\deg \varphi) P_m(X)$. 
\end{thm}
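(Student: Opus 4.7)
The plan is to reduce $P_m(Y)$ to a sum of $h^0$-groups on $\Alb(X)$ twisted by torsion line bundles, and then use the $I.T.$ property of $a_{X*}\omega_X^{\otimes m}$ (valid when $q(I)=0$ and $m\geq 2$, as highlighted in the discussion before the theorem) to evaluate each summand as $P_m(X)$.

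First, since $\varphi\colon Y\to X$ is \'etale, one has $\omega_Y^{\otimes m}\simeq \varphi^*\omega_X^{\otimes m}$. Applying flat base change (as in \cite[Lemma 1.3]{BO}, with $\mu$ \'etale) along the cartesian square gives
\[
\widetilde a_* \varphi^*\omega_X^{\otimes m} \;\simeq\; \mu^*\, a_{X*}\omega_X^{\otimes m}.
\]
Since $\widetilde a$ is affine on global sections in degree zero, taking $H^0$ yields
\[
P_m(Y) \;=\; h^0\bigl(Y,\varphi^*\omega_X^{\otimes m}\bigr) \;=\; h^0\bigl(B,\mu^* a_{X*}\omega_X^{\otimes m}\bigr).
\]

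Next I would exploit the standard decomposition for isogenies of abelian varieties. If $\widehat\mu\colon \Pic^0(\Alb(X))\to \Pic^0(B)$ denotes the dual isogeny, then $\mu_*\sO_B \simeq \bigoplus_{\alpha\in\ker\widehat\mu}\alpha$, and $|\ker\widehat\mu|=\deg\mu=\deg\varphi$. Combining this with the projection formula gives
\[
P_m(Y) \;=\; h^0\bigl(\Alb(X),\,a_{X*}\omega_X^{\otimes m}\otimes \mu_*\sO_B\bigr) \;=\; \sum_{\alpha\in\ker\widehat\mu} h^0\bigl(\Alb(X),\,a_{X*}\omega_X^{\otimes m}\otimes \alpha\bigr).
\]

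Now comes the crucial input: by \cite[Corollary 12.2]{LPS} quoted in the discussion following Proposition \ref{plurinu}, the hypothesis $q(I)=0$ together with $m\geq 2$ ensures that $a_{X*}\omega_X^{\otimes m}$ satisfies $I.T.$ with index $0$. In particular, $h^i\bigl(\Alb(X),a_{X*}\omega_X^{\otimes m}\otimes\alpha\bigr)=0$ for all $i>0$ and all $\alpha\in\Pic^0(\Alb(X))$, so by invariance of Euler characteristic under $\Pic^0$-twists we have
\[
h^0\bigl(\Alb(X),a_{X*}\omega_X^{\otimes m}\otimes\alpha\bigr)\;=\;\chi\bigl(a_{X*}\omega_X^{\otimes m}\bigr)
\]
for every $\alpha$. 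Finally, as shown inside the proof of Proposition \ref{plurinu} (specifically equation \eqref{eqchi} combined with the fact that when $q(I)=0$ the function $\alpha\mapsto h^0(X,\omega_X^{\otimes m}\otimes\alpha)$ is constant on $\Pic^0(X)$), this common value equals $P_m(X)$.

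Summing over the $\deg\varphi$ characters of $\ker\widehat\mu$ yields $P_m(Y) = (\deg\varphi)\,P_m(X)$, which is the desired equality. There is no serious obstacle here: each step is a standard tool (étale triviality of the canonical bundle, flat base change, isogeny decomposition, $I.T.$-vanishing). The content of the theorem is entirely packaged into the $I.T.$ property of $a_{X*}\omega_X^{\otimes m}$ under $q(I)=0$; once that is invoked, the multiplicativity is formal.
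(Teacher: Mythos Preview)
Your proof is correct, but it takes a genuinely different route from the paper's own argument. The paper proves Theorem \ref{multiplication} by following the strategy of \cite[Theorem 11.2]{HPS} and \cite[Theorem 11.2.23]{Laz2}: it introduces the asymptotic multiplier ideal sheaf $\sI\big(\norm{\omega_X^{\otimes(m-1)}}\big)$, pushes $\omega_X^{\otimes m}\otimes\sI\big(\norm{\omega_X^{\otimes(m-1)}}\big)$ forward along the composition $a_I\circ a_X$ (where $a_I\colon\Alb(X)\to\Alb(Z)$ is the isomorphism induced by the Iitaka fibration), shows this pushforward has vanishing higher cohomology so that $P_m(X)=\chi(\sH)$, repeats the construction on $Y$ (using a Stein factorization to access the Iitaka fibration of $Y$), and finally compares the two Euler characteristics via base change of multiplier ideals under the \'etale map $\varphi$.

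Your argument bypasses the multiplier ideals and the Iitaka fibration entirely: once one accepts from \cite[Corollary 12.2]{LPS} that $a_{X*}\omega_X^{\otimes m}$ satisfies $I.T.$ with index $0$ for $m\geq 2$ and $q(I)=0$, the multiplicativity is a formal consequence of flat base change, the isogeny decomposition $\mu_*\sO_B\simeq\bigoplus_{\alpha\in\ker\widehat\mu}\alpha$, and invariance of $\chi$ under $\Pic^0$-twists. This is considerably shorter and more transparent. The trade-off is that it outsources the analytic content to \cite{LPS}, whose proof of the $I.T.$ property itself goes through essentially the same multiplier-ideal machinery; the paper's proof is thus closer to being self-contained, while yours makes explicit that the theorem is an immediate corollary of the $I.T.$ statement already cited in the discussion preceding it. (A minor wording quibble: your phrase ``$\widetilde a$ is affine on global sections in degree zero'' is unnecessary---$H^0(Y,\sG)=H^0(B,\widetilde a_*\sG)$ holds for any morphism.)
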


\begin{proof}
	The proof follows the general strategy of \cite[Theorem 11.2]{HPS} and \cite[Theorem 11.2.23]{Laz2}. This goes as follows.
	As in the proof of Proposition \ref{plurinu}, the Iitaka fibration $I$ induces a surjective morphism $a_I \colon \Alb(X) \to \Alb(Z)$  with connected fibers such that the diagram
	\begin{equation*}\label{fundamentalItake}
	\centerline{ \xymatrix@=32pt{
			X \ar[r]^{a_X} \ar[d]^{I} & \Alb(X) \ar[d]^{a_I}  \\
			Z  \ar[r]^{a_Z}  & \Alb(Z) \\}} 
	\end{equation*}
	commutes (\emph{cf}. \cite[Proposition 2.1]{HP}). 
Therefore $a_I$ is an isomorphism as $q(I)=0$.  Fix now an integer $m\geq 2$ 
	and let $\sI \big(\norm*{\omega_X^{\otimes (m-1)}} \big)$ be the asymptotic multiplier ideal sheaf as defined in \cite[Definition 11.1.2]{Laz2}.
	Moreover set $g=a_I\circ a_X$ and define the sheaf
	$$\sH \; = \; g_*  \Big( \omega_X^{\otimes m}   \otimes  \sI \big (\norm*{\omega_X^{\otimes (m-1)}}  \big) \Big).$$
	Since $g$ factors though $I$, we have a linear equivalence relation $tK_X\sim g^*H+E$ where $H$ is an ample divisor on $\Alb(Z)$, $E$ is an effective divisor, and $t\gg 0$ is a sufficiently large integer.
This implies, as proved in the course of the proof of \cite[Theorem 11.2]{HPS},  that 
$$H^0(\Alb(Z),\sH) \; = \;  H^0(X,\omega_X^{\otimes m}) \quad \mbox{ and } \quad H^i(\Alb(Z), \sH) = 0  \mbox{ for all }  i>0.$$  Thus we have  $P_m(X) = \chi(\sH)$.  

Now, consider the Stein factorization $s \colon Y \to S$ of the composition $I\circ\varphi \colon Y \to Z$. 
As the general fiber of $s$ has Kodaira dimension equal to zero, 
by \cite[Remark 2.1.35]{Laz1} $s$ factors through a non-singular representative of the  
 Iitaka fibration of $Y$, which, with a slight abuse of notation, we denote it by $I_Y \colon Y \to W$. 
Define $\widetilde g=a_I \circ \mu \circ \widetilde a $.  
	As $\widetilde g$ factors though $I_Y$, we can write
\begin{equation}\label{eq:iitaka}	
	 \widetilde t K_Y \sim \widetilde g^*   \widetilde H + \widetilde E
	 \end{equation}
	for some ample line bundle $\widetilde H$  on $\Alb(Z)$, effective divisor $\widetilde E$ on $Y$,  
	and  large integer $\widetilde t\gg 0$. 
%
By defining the sheaves
	$$\widetilde \sG \, = \, \widetilde g_*\Big( \omega_{Y}^{\otimes m} \otimes \sI \big(\norm*{\omega_{Y}^{\otimes (m-1)} } \big) \Big), \, \; \widetilde \sH \, = \, \widetilde a_{*}\Big( \omega_{Y}^{\otimes m} \otimes \sI \big( \norm*{\omega_{Y}^{\otimes (m-1)} } \big) \Big),$$
the relation \eqref{eq:iitaka}  ensures that 
\begin{equation}\label{eq:vanishingk}
	H^0 \big(\Alb(Z), \widetilde \sG \big) = H^0 \big(Y,\omega_Y^{\otimes m} \big) \quad \mbox{ and }\quad H^i \big(\Alb(Z), \widetilde \sG \big) \, = \, 0
	 \; \mbox{ for } \; i>0, 
	 \end{equation}
again as shown in the argument of  the proof of  
 \cite[Theorem 11.2]{HPS}.	\footnote{Even if not explicitly stated, 
 the proof of   \cite[Theorem 11.2]{HPS} actually proves the isomorphism and vanishings in 
 \eqref{eq:vanishingk} for any morphism from $Y$ to an abelian variety that factors 
 through the Iitaka fibration of $Y$.}
	 We conclude  that  $P_m(Y)= \chi(\widetilde \sG) = \chi(\widetilde \sH)$ as in addition there are isomorphisms $H^i(B, \widetilde \sH) \simeq  H^i(\Alb(Z), \widetilde \sG)$ for all $i\geq 0$ (recall that $\mu$ is \'etale). 
Finally, we note that $\chi(\widetilde  \sH) = \chi \big(\mu^* \big(a_I^{-1} \big)_*  \sH \big)$. Indeed, by base change and \cite[Theorem 11.2.16]{Laz2},  we obtain the following isomorphisms
$$\widetilde \sH  \; \simeq \; \widetilde a_* \varphi^* \Big(\omega_X^{\otimes m} \otimes \sI \big( \norm*{ \omega_X^{\otimes (m-1)} } \big) \Big)\;  \simeq \; \mu^* \big(a_{I}^{-1}\big)_*\sH. $$
To conclude, we note that  
\[
\chi \Big(\mu^* \big(a_I^{-1}\big)_* \sH \Big) \;  = \;  (\deg \mu) \,  \chi \Big(\big(a_I^{-1}\big)_* \sH \Big) \; = \; (\deg \mu) \, \chi(\sH)
\]
as $a_I\colon\Alb(X) \to \Alb(Z)$ is an isomorphism.
\end{proof}
 
\begin{rem}[Higher direct images and  multiplier ideal sheaves]
One can apply Theorem \ref{cor1} to other classes of sheaves that satisfy the generic vanishing condition of Definition \ref{defgv}. In this direction, the paper \cite{pareschi+popa:gv} contains  several examples of $GV$-sheaves. As an example, by keeping the notation of  \eqref{fundamental}, Theorem \ref{cor1} and \cite[Theorem 5.8]{pareschi+popa:gv} give 
$$\lim_{d\to \infty} \frac{ h^0(A , R^if_{d*}\omega_{X_d} ) }{\deg \mu_d} \; = \; \chi(R^i f_* \omega_X)\quad \mbox{ for any } \quad i\geq 0.$$
Moreover,  Theorem \ref{cor1} in combination with  \cite[Corollary 5.2]{pareschi+popa:gv} give the following statement. Suppose that the Albanese map $a_X \colon X \to \Alb(X)$ is generically finite onto its image, and let $L$ be a line bundle with non-negative Kodaira dimension. With notation as in  \eqref{diagrintro}, we have  
$$\lim_{d\to \infty} \frac{ h^0 \big(X_d, \omega_{X_d}  \otimes L_d \otimes \sI \big( \norm*{L_d} \big )\big) }{ \deg \varphi_d } \; = \; \chi\big(\omega_X \otimes L \otimes \sI \big( \norm*{L} \big)\big).$$

\end{rem}

\section{Applications to $L^2$-Cohomology}\label{secl2}

In order to define $L^2$-Betti numbers we follow the reference \cite{Luck02}.   
Let $G$ be a discrete group, and let $M$ be a co-compact free proper $G$-manifold without boundary endowed with a $G$-invariant Riemannian metric. 
Define the space of smooth $L^2$-integrable harmonic  $k$-forms 
\[
\mathcal{H}^k_{(2)} (M) \; = \; \big\{ \, \omega\in\Omega^k (M) \;  | \;  \Delta_d \omega = 0, \; \int_M \omega \wedge *\omega  <\infty \big\}
\]
where $*$ is the Hodge star operator and $\Delta_d = d d^* + d^* d$ is the Hodge-Laplacian operator.
By \cite[Section 1.3.2]{Luck02}, the spaces $\mathcal{H}^k_{(2)}(M)$ are finitely generated Hilbert modules over the von Neumann algebra $\mathcal{N}(G)$ of  $G$.  We define the $L^2$\emph{-Betti numbers}  $b^{(2)}_k \big(M; \mathcal{N}(G) \big)$ of $(M,G)$ as  the von Neumann dimension of the $\mathcal{N}(G)$-modules   $\mathcal{H}^k_{(2)}(M)$:
\[
b^{(2)}_k \big(M;\mathcal{N}(G) \big) \; \stackrel{{\rm def}}{=} \; \dim_{\mathcal{N}(G)}\mathcal{H}^k_{(2)}( M ).
\] 
The $L^2$-Betti numbers assume values in the extended interval $[0,\infty]$ of the real numbers, and $b^{(2)}_k \big(M, \mathcal{N}(G) \big) \in [0,\infty)$ if the action of $G$ is co-compact. 

Finally, in order to define the $L^2$\emph{-Hodge numbers} $h^{(2)}_{p,q} \big(M;\mathcal{N}(G) \big)$ of $(M,G)$,  we define
\[
\mathcal{H}^{p,q}_{(2)} (M) \; =  \; \big\{ \, \omega\in\Omega^{p,q} (M) \;  | \;  \Delta_{\overline \partial}  \omega = 0, \; \int_M \omega \wedge *\omega  < \infty \big\},
\]
where $\Delta_{\overline \partial} = \overline \partial  \,  \overline \partial^* + \overline \partial^*  \,  \overline \partial$ is the $\overline \partial$-Laplacian, and set
\[
h^{(2)}_{p,q} \big(M;\mathcal{N}(G) \big) \; \stackrel{{\rm def}}{=} \; \dim_{\mathcal{N}(G)}\mathcal{H}^{p,q}_{(2)}(M).
\] 
By \cite[Chapter 11]{Luck02}, there is  a $L^2$-Hodge decomposition which gives
\begin{equation}\label{l2hodge}
b^{(2)}_k \big(M;\mathcal{N}(G) \big) \; = \; \sum_{p+q=k} h^{(2)}_{p,q}\big(M;\mathcal{N}(G) \big).
\end{equation}

\subsection{(Non-)Vanishing of $L^2$-Betti numbers}

Let $X$ be a smooth projective variety of dimension $n$, and let $a_X \colon X \to \Alb(X)$ be the Albanese map. Moreover set $g = \dim \Alb(X)$. The \emph{universal Albanese cover} $\overline \pi \colon \overline X \to X$ is defined as the  pullback of $a_X$ via the topological universal cover $\C^g \to \Alb(X)$ (\emph{cf}. \cite[Section 3.2]{DCL19}). 
We set 
$$\Gamma = \pi_1(X), \quad \overline{\Gamma} = \pi_1( \overline{X} ), \quad  G =\Gamma / \overline{ \Gamma} \quad \mbox{ and }\quad A=\Alb(X).$$ 

\begin{thm}\label{corl2}
 If $X$ satisfies the weak generic Nakano vanishing theorem, then the $L^2$-Betti numbers of $\overline X$ are 
\[  b_k^{(2)} \big(\overline{X}; \mathcal{N}(G)  \big) \; = \; 
\begin{cases} 
      (-1)^n \chi_{\rm top}(X)  &\mbox{if }\quad    k =  n  \\
           0 & \mbox{if }\quad   k  \neq n.
   \end{cases}
\]
In particular, we have $\mathcal{H}_{(2)}^{p,q}(\overline{X}) = 0$ if $p+q\neq n$.
\end{thm}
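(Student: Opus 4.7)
The plan is to derive Theorem \ref{corl2} by combining L\"uck's Approximation Theorem with the asymptotic computation of Betti numbers already carried out in Proposition \ref{corbetti}. The crucial observation is that the finite \'etale tower $\varphi_d \colon X_d \to X$ defined in \eqref{diagrintro} is a cofinal filtration for the Albanese universal cover $\overline \pi \colon \overline X \to X$, so approximating $L^2$-Betti numbers by normalized Betti numbers along this tower reduces the statement to a fact already proved under the weak generic Nakano vanishing hypothesis.

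First, I would verify the cofinality in terms of fundamental groups. The cover $X_d \to X$ is classified by the subgroup $H_d \stackrel{{\rm def}}{=} (a_{X*})^{-1}(d\cdot \pi_1(A)) \triangleleft \Gamma$, where $a_{X*}\colon \Gamma \to \pi_1(A)$ is the map induced by the Albanese morphism. Since $\pi_1(A) \cong H_1(X,\Z)/{\rm torsion}$ is a free abelian group of rank $2g$, one has $\cap_{d\geq 1} d\cdot \pi_1(A) = 0$, which forces $\cap_{d\geq 1} H_d = \ker a_{X*} = \overline{\Gamma}$. Consequently $G_d \stackrel{{\rm def}}{=} H_d/\overline{\Gamma}$ is a descending chain of finite-index normal subgroups of $G = \Gamma/\overline{\Gamma}$ with trivial intersection, and the inverse limit of the tower $\{X_d\}_{d\geq 1}$ recovers $\overline X$.

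Second, L\"uck's Approximation Theorem \cite[Main Theorem]{Luck}, applied to the tower $\{G_d\}$ acting on $\overline X$, yields
\[
b^{(2)}_k\bigl(\overline X;\mathcal{N}(G)\bigr) \;=\; \lim_{d\to\infty} \frac{b_k(X_d)}{\deg \varphi_d}, \qquad k\geq 0.
\]
Under the weak generic Nakano vanishing hypothesis on $X$, Proposition \ref{corbetti} evaluates the right-hand side to $(-1)^n \chi_{\rm top}(X)$ when $k=n$ and to $0$ otherwise, proving the main formula. The final assertion $\mathcal{H}^{p,q}_{(2)}(\overline X) = 0$ for $p+q\neq n$ then follows immediately from the $L^2$-Hodge decomposition \eqref{l2hodge}: each $L^2$-Hodge number is nonnegative, yet the sum $\sum_{p+q=k} h^{(2)}_{p,q}\bigl(\overline X;\mathcal{N}(G)\bigr)$ equals the vanishing $L^2$-Betti number $b^{(2)}_k$ whenever $k\neq n$, so every summand must vanish.

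The main technical subtlety will be the invocation of L\"uck's theorem for the cover $\overline X$, which is typically not the topological universal cover of $X$; one must use the version that computes the $L^2$-Betti numbers of the intermediate cover associated to the intersection $\cap_d H_d = \overline \Gamma$ rather than the universal one, or equivalently rerun the approximation argument on the $G$-CW structure that $\overline X$ inherits from the tower. Beyond this point, the proof is a bookkeeping exercise combining Proposition \ref{corbetti} with the Hodge decomposition \eqref{l2hodge}.
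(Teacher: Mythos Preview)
Your approach is essentially the same as the paper's: use L\"uck's Approximation Theorem on the Albanese tower and then invoke Proposition \ref{corbetti}. There is one slip, however. The subgroups $H_d = (a_{X*})^{-1}\bigl(d\cdot\pi_1(A)\bigr)$ do \emph{not} form a descending chain: for instance $H_2$ and $H_3$ are incomparable, since $3\Z^{2g}\not\subset 2\Z^{2g}$. L\"uck's theorem in the form you cite requires a nested sequence $G_1\supset G_2\supset\cdots$ with $\bigcap_d G_d=\{1\}$, so your invocation is not yet justified. The paper fixes this by passing to the subtower $Y_d\simeq X_{d!}$, obtained by iterating the multiplication maps $\mu_2,\mu_3,\ldots$; since $d!\mid (d+1)!$ the corresponding subgroups are genuinely nested, and L\"uck's theorem then gives
\[
b_k^{(2)}\bigl(\overline X;\mathcal{N}(G)\bigr)\;=\;\lim_{d\to\infty}\frac{b_k(X_{d!})}{\deg\varphi_{d!}},
\]
which is a subsequence of the limit computed in Proposition \ref{corbetti}. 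Your argument goes through verbatim once you make this adjustment.

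A second minor point: in the last step you conclude $\mathcal{H}^{p,q}_{(2)}(\overline X)=0$ from $h^{(2)}_{p,q}=0$. This uses not only nonnegativity but also the fact that the von Neumann dimension is faithful, i.e.\ a Hilbert $\mathcal{N}(G)$-module with dimension zero is the zero module; the paper makes this explicit by citing \cite[Theorem 1.12 (1)]{Luck02}.
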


\begin{proof}

Consider the following cartesian diagram induced inductively by the multiplication maps $\mu_d$ via the base change:
$$
 \centerline{ \xymatrix@=32pt{
\cdots \ar[r] & Y_{d+1} \ar[r] \ar[d] & Y_d \ar[d] \ar[r] & Y_{d-1} \ar[r] \ar[d]  & \cdots \ar[r] &  Y_2 \ar[d] \ar[r] &  X  \ar[d]^{a_X}  \\
 \cdots  \ar[r] & A  \ar[r]^{\mu_{d+1}}  & A  \ar[r]^{\mu_d} & A \ar[r]^{\mu_{d-1}}  & \cdots \ar[r] & A \ar[r]^{\mu_2} &  A.\\}} 
 \noindent
$$
By using  the notation of the commutative diagram \eqref{fundamental}, we immediately realize that $Y_d\simeq X_{d!}$. We set $\Gamma_d = \pi_1(Y_d)$. 
By \cite[Section 3.1]{DCL19}, together with the proof of \cite[Lemma 3.2]{DCL19}, the homomorphism $a_{X\#} \colon \pi_1(X) \to \pi_1(A)$ is surjective
and   the varieties $Y_d$ satisfy
$${\rm ker} \big(a_{X\#} \colon \pi_1(X) \to \pi_1(A) \big) \; = \; \bigcap_{d=1}^{\infty} \Gamma_d$$
(\emph{i.e.}, in the terminology of  \cite[Lemma 3.2]{DCL19}, the isogenies $r_i$ can be chosen as  the multiplication maps $\mu_d$). 
Moreover, the universal Albanese  cover $\overline \pi \colon \overline X \to X$ is identified to  the regular cover associated to the normal separable subgroup ${\rm ker}(a_{X\#})$. 
Therefore $\overline \Gamma = {\rm ker}(a_{X\#})$ and there are isomorphisms
$$Y_d \; \simeq \; \bigslant{\overline X}{G_d} \quad \mbox{ where } \quad  G_d \; \stackrel{{\rm def}}{=} \; \bigslant{\Gamma_d}{\overline \Gamma}.$$
%
As the sequence $\{ G_d  \}_{d\geq 1}$ is an inverse  system of normal subgroups such that  $\cap_{d\geq 1} G_d = \{ 1\}$, L\"uck's Approximation Theorem \cite[Theorem 13.3]{Luck02} and \cite[Example 1.32]{Luck02} yield
$$b_k^{(2)} \big(\overline{X}; \mathcal{N}(G) \big)   \; = \;
\lim_{d\to \infty} b_k^{(2)}\big( Y_d; \mathcal{N}(G / G_d) \big) \; = \; \lim_{d\to \infty} \frac{b_k(Y_d )}{ [\Gamma : \Gamma_d ]} 
 \; = \; \lim_{d\to \infty} \frac{b_k(X_{d!})}{\deg \mu_{d!}}.$$  At this point,  the first statement of the theorem is an application of Proposition \ref{corbetti}.  On the other hand, the second follows by the fact that the von Neumann dimension of a Hilbert module is zero if and only if the module itself is trivial  (see \cite[Theorem 1.12 (1)]{Luck02}), and   the $L^2$-Hodge decomposition \eqref{l2hodge}.
\end{proof}

The following non-vanishing  result was proved by Gromov in the case of topological universal covers of  K\"ahler hyperbolic manifolds (\emph{cf}. \cite[Section 2]{Gro} and \cite[Theorem 11.35]{Luck02}).

\begin{cor}\label{cornonvan}
Let $X$ be as in Theorem \ref{corl2}. If $\chi(\Omega_X^{p})\neq 0$, then we have 
$$\mathcal{H}_{(2)}^{p,n-p}(\overline{X}) \; \neq \; 0.$$ Moreover, if $\chi(\omega_X) \neq 0$, then there exists a nontrivial holomorphic $L^2$-integrable $n$-form on the universal Albanese cover $\overline X$.
\end{cor}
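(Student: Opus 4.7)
The plan is to upgrade the Betti-number computation of Theorem \ref{corl2} to the level of the Hodge bigrading. Concretely, I aim to establish that for every $(p,q)$ with $p+q=n$ one has
\[
h^{(2)}_{p,q}\big(\overline X;\mathcal{N}(G)\big) \; = \; (-1)^{q}\,\chi(\Omega_X^{p}),
\]
while $h^{(2)}_{p,q}(\overline X;\mathcal{N}(G))=0$ for $p+q\neq n$. Granting this, the first assertion of the corollary is immediate: if $\chi(\Omega_X^{p})\neq 0$, then $h^{(2)}_{p,n-p}(\overline X;\mathcal{N}(G))\neq 0$, and the faithfulness of the von Neumann dimension (\cite[Theorem 1.12 (1)]{Luck02}) forces the Hilbert module $\mathcal{H}^{p,n-p}_{(2)}(\overline X)$ to be nontrivial. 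Taking $p=n$, so that $\Omega_X^{n}=\omega_X$, gives a nonzero element of $\mathcal{H}^{n,0}_{(2)}(\overline X)$ whenever $\chi(\omega_X)\neq 0$; this is by definition a nontrivial $L^2$-integrable holomorphic $n$-form on $\overline X$.

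To prove the refined identity, I would repeat the strategy of Theorem \ref{corl2} on the same tower $Y_d\simeq \overline X/G_d$ associated to the cofinal family $\{G_d\}$ of normal subgroups of $G=\Gamma/\overline\Gamma$ with $\bigcap_d G_d=\{1\}$. Applying L\"uck's approximation componentwise in the Hodge bigrading yields
\[
h^{(2)}_{p,q}\big(\overline X;\mathcal{N}(G)\big) \; = \; \lim_{d\to\infty}\frac{h^{p,q}(Y_d)}{[\Gamma:\Gamma_d]} \; = \; \lim_{d\to\infty}\frac{h^{p,q}(X_{d!})}{\deg\varphi_{d!}},
\]
and the right-hand side is identified by Proposition \ref{corbetti} with $(-1)^{q}\chi(\Omega_X^{p})$ when $p+q=n$ and with $0$ otherwise. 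Non-negativity of Hodge numbers guarantees that the resulting sign $(-1)^{q}\chi(\Omega_X^{p})$ is indeed non-negative, which is consistent with Lemma \ref{lemchi} applied to the weak $GV$-sheaf $\Omega_X^{p}$ of index $n-p$.

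The main technical point, and the one I would spend most care on, is justifying the bigraded version of L\"uck's theorem: \cite{Luck02} states the result for the de Rham Laplacian, whereas here I need it for the $\overline\partial$-Laplacian restricted to $(p,q)$-forms. Since $\Delta_{\overline\partial}$ is $G$-equivariant, self-adjoint and elliptic on each bigraded piece, and since the K\"ahler identity $\Delta_d=2\Delta_{\overline\partial}$ means that the spectral density function of $\Delta_d$ splits as a direct sum over bidegrees, the proof of \cite[Theorem 13.3]{Luck02} carries over verbatim to give the displayed convergence. Should one wish to avoid even this small extension, one can equivalently apply \cite[Theorem 13.3]{Luck02} to the Dolbeault complex $(\Omega^{p,\bullet},\overline\partial)$ and then conclude via the $L^2$-Hodge decomposition \eqref{l2hodge}.
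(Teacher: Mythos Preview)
Your target formula $h^{(2)}_{p,n-p}(\overline X;\mathcal N(G))=(-1)^{n-p}\chi(\Omega_X^p)$ is correct, and it is stronger than what the paper actually proves; but the route you propose to reach it has a genuine gap. L\"uck's approximation theorem \cite[Theorem 13.3]{Luck02} is a statement about $G$-CW-complexes (equivalently, about chain complexes of finitely generated free $\Z[G]$-modules), and its proof uses in an essential way that the combinatorial Laplacians are matrices over $\Z[G]$: this integrality is what controls the spectral density functions uniformly along the tower. The analytic statement for Betti numbers follows only because Dodziuk's $L^2$-de Rham theorem identifies the analytic and combinatorial $L^2$-Betti numbers. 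There is no such combinatorial model for the Dolbeault complex, so neither ``the proof carries over verbatim'' nor ``apply Theorem 13.3 to $(\Omega^{p,\bullet},\overline\partial)$'' is justified as written. Your remark that the K\"ahler identity splits the spectral density of $\Delta_d$ into bidegree pieces is true, but convergence of a finite sum of non-negative functions does not imply convergence of each summand, so this does not reduce the Dolbeault case to the de Rham case.

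The paper avoids this issue entirely: it uses only Kazhdan's \emph{inequality} for $L^2$-Hodge numbers \cite[Theorem 2]{K} (see also \cite[p.~6--7]{JZ}),
\[
h^{(2)}_{p,n-p}\big(\overline X;\mathcal N(G)\big)\;\ge\;\limsup_{d\to\infty}\frac{h^{p,n-p}(Y_d)}{[\Gamma:\Gamma_d]}\;\ge\;\lim_{d\to\infty}\frac{h^{p,n-p}(X_{d!})}{\deg\varphi_{d!}}\;=\;(-1)^{n-p}\chi(\Omega_X^p),
\]
the last equality by Proposition \ref{corbetti}. This already gives the non-vanishing. If you want your exact formula, there are two legitimate ways to close the gap. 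First, sum Kazhdan's lower bounds over $p$ and compare with $b^{(2)}_n(\overline X)=\sum_p(-1)^{n-p}\chi(\Omega_X^p)$ from Theorem \ref{corl2}; since the sum of the lower bounds equals the total, each inequality is an equality. Second, and more conceptually, invoke Atiyah's $L^2$-index theorem for the Dolbeault operator on $\Omega_X^p$, which gives $\sum_q(-1)^q h^{(2)}_{p,q}(\overline X)=\chi(\Omega_X^p)$; combined with the vanishing $h^{(2)}_{p,q}(\overline X)=0$ for $p+q\neq n$ already established in Theorem \ref{corl2}, this yields your formula directly.
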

\begin{proof}
We use the notation of the previous proof.
First of all we observe that by Proposition \ref{corbetti} we have  the inequalities 
$$\limsup_{d\to \infty} \frac{h^{p,n-p} (Y_{d}) }{ \deg \mu_{d!}}  \; \geq \lim_{d \to \infty} \frac{ h^{p,n-p}(X_{d!})}{\deg \mu_{d!}}  \; = (-1)^{n-p} \chi(\Omega_X^p) \;  >  \; 0$$
Thus the result follows by Kazhdan's inequality \cite[Theorem 2]{K} (\emph{cf}. also \cite[p. 6-7]{JZ}): 
$$ h^{(2)}_{p,n-p} \big(\overline X, \mathcal{N}(G)  \big)\; \geq \; \limsup_{d\to \infty} \frac{h^{p,n-p} (Y_{d}) }{ \deg \mu_{d!}}.$$
The second statement is proved  as in  \cite[Corollary 11.36]{Luck02}. In other words,  if we have a non-zero form $\omega \in \sH^{n,0}_{(2)}(\overline X)$, then $\Delta_{\overline \partial} \omega = 0$ and $\overline \partial \omega = 0$. This means that  $\omega$ is holomorphic. 
\end{proof}

 
 \section{Appendix: Coverings of  Varieties with Unbounded Irregularity}\label{unbounded}
 Let $Y$ be a smooth projective variety satisfying  $q(Y) = h^{1,0}(Y)= \dim \Alb(Y)>0$ and $H^2(Y,\Z)_{ \rm tor}=0$. 
 We provide sufficient and necessary conditions for the irregularities $Q= \{ q(Y_i) \}_{i=1}^{\infty}$ of a series of  coverings  $\pi_i \colon Y_i \to Y$ induced 
 by the multiplication maps on $\Alb(Y)$ to diverge as $\deg \pi_i \to \infty$. 
This problem has been already  addressed  in the literature.
For instance, by the recent work of Vidussi \cite[Lemma 1.3]{Vid17} and Stover \cite[Theorem 3]{Sto17}, the irregularity of any unramified abelian cover
of the  Cartwright--Steger surface\footnote{The Cartwright--Steger surface $S$  is a complex hyperbolic surface with \emph{minimal} Euler characteristic $\chi(S)=3$, and non-trivial first Betti number. It was computationally discovered in \cite{CS} during the classification of fake projective planes.  We refer to \cite{Yeung2} for an in depth study of its geometry.} is equal to one. 
On the other hand, it is very easy to construct towers of coverings with unbounded irregularities. 
 
Turning to details, let $X$ be a smooth projective variety of dimension $n$ and $a_X \colon X \to \Alb(X)$ be the Albanese map.
The multiplication maps $\mu_d \colon \Alb(X) \to \Alb(X),$ $\mu_d(x)=dx$ induce via base-change unramified covers $a_d \colon X_d \to X$. We use the term \emph{fibration} to mean a surjective morphism of varieties with connected fibers. The following result builds upon \cite[Corollaire 2.3]{B}.

 \begin{thm}\label{qfibr}
Suppose that  $\limsup_{d\to \infty} q(X_d)=\infty$. 
 	Then  $X$ admits a fibration $p\colon X \to C$ onto a smooth curve of genus $g$
 	such that either $g\geq 2$, or $g=1$ and the fibration admits  two multiple fibers whose multiplicities are not coprime. If in addition $H^2(X,\Z)_{ \rm tor}=0$, then  the converse holds.
 \end{thm}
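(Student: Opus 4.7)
The plan is to rephrase the problem in terms of the non-vanishing locus $V^1(\sO_X) \subseteq \widehat{\Alb(X)}$ attached to $a_X\colon X\to \Alb(X)$, and then to invoke Beauville's structural result \cite[Corollaire 2.3]{B}. Applying to $\sF = \sO_X$ and $f = a_X$ the projection-formula decomposition that underlies Theorem \ref{cor1}, we record the identity
\[
q(X_d) \; = \; h^1(X_d, \sO_{X_d}) \; = \; \sum_{\alpha \in S_d} h^1(X, a_X^*\alpha),
\]
where $S_d \subseteq \widehat{\Alb(X)}$ denotes the set of $d$-torsion points. Consequently $q(X_d)$ counts, weighted by $h^1(X, a_X^*\alpha)$, the $d$-torsion points that lie in $V^1(\sO_X)$.

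For the forward direction, assume that $\limsup_{d\to\infty} q(X_d) = \infty$. If $V^1(\sO_X)$ were a finite set $\{\alpha_1,\ldots, \alpha_N\}$, the identity above would force the uniform bound $q(X_d) \leq N\cdot \max_j h^1(X, a_X^*\alpha_j)$, contradicting the hypothesis. Hence $V^1(\sO_X)$ contains a positive-dimensional component, which by Simpson's theorem (\emph{cf}. \cite[Corollary 19.2]{Sch}) is a torsion translate of a subtorus of $\widehat{\Alb(X)}$. Beauville's \cite[Corollaire 2.3]{B} then attaches to such a component a fibration $p\colon X\to C$ of precisely the described kind: either $g(C)\geq 2$, or $g(C)=1$ with two multiple fibres whose multiplicities share a common factor.

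For the converse direction, assume $X$ admits such a fibration $p\colon X\to C$ and that $H^2(X,\Z)_{\rm tor}=0$. The complementary half of \cite[Corollaire 2.3]{B} produces a positive-dimensional torsion translate $\beta + T_0\subseteq V^1(\sO_X)$ arising from $p$. In the case $g(C)\geq 2$ one may take $T_0=p^*\Pic^0(C)$ and $\beta = 0$: the Leray spectral sequence, together with $p_*\sO_X=\sO_C$, yields $H^1(C, L)\hookrightarrow H^1(X, p^*L)$ for every nontrivial $L\in \Pic^0(C)$, so that $p^*\Pic^0(C)\setminus \{0\}\subseteq V^1(\sO_X)$. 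The case $g(C)=1$ is handled by Beauville's orbifold analysis of the multiple fibres. Letting $n_0$ denote the order of $\beta$, Lemma \ref{dtor} gives $|(\beta+T_0)\cap S_d| = d^{2\dim T_0}$ for every $d$ that is a multiple of $n_0$, and hence
\[
q(X_d) \;\geq\; d^{2\dim T_0}\cdot \min_{\alpha\in \beta + T_0} h^1(X, a_X^*\alpha) \;\longrightarrow\; \infty
\]
along this subsequence, giving $\limsup_{d\to\infty} q(X_d)=\infty$.

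The main obstacle is the genus-one case, where the mere existence of a fibration does not by itself force a positive-dimensional component of $V^1(\sO_X)$: one must exploit the non-coprimality of the multiplicities of the multiple fibres via an orbifold canonical-bundle calculation. This is the content of \cite[Corollaire 2.3]{B} that we invoke as a black box, and it is also where the hypothesis $H^2(X,\Z)_{\rm tor}=0$ enters, in order to ensure that the positive-dimensional subvariety produced by the orbifold data actually lies inside $\Pic^0(X)$ and is torsion-translated by a class of controlled order, so that the growth estimate from Lemma \ref{dtor} may be activated.
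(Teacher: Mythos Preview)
Your proposal is correct and follows essentially the same route as the paper: both reduce the question to the existence of a positive-dimensional component of $V^1(\sO_X)$ via the decomposition $q(X_d)=\sum_{\alpha\in S_d} h^1(X,\alpha)$, and then invoke Beauville \cite{B} for the bridge to fibrations over curves. The only difference is that the paper unpacks the $g=1$ case more explicitly---citing \cite[Remarque 2.4]{B} for the forward direction and \cite[Proposition 1.5, \S 1.6]{B} to pass from $\Pic^\tau(X,p)$ to $\Pic^0(X,p)$ under the hypothesis $H^2(X,\Z)_{\rm tor}=0$ in the converse---whereas you treat \cite[Corollaire 2.3]{B} as a black box encapsulating all of this; but the underlying argument is the same.
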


 \begin{proof}
 Let $S_d$ be the set of $d$-torsion points of $\Pic^0(X)$.
 	The irregularity $q(X_d)=h^1 (X_d, \sO_{X_d})$ can be computed with the techniques  of Theorem  \ref{cor1}:
 	\begin{equation}\label{eqq}
 	q(X_d) \; = \; q(X) \; + \; \sum_{\alpha \in S_d, \,\alpha \neq  \sO_X} h^1(X, \alpha).
 	\end{equation}
 	First of all we prove that $\limsup_{d\to \infty} q(X_d)=\infty$ if and only if  there exists a positive-dimensional component of the Green--Lazarsfeld locus
 	$$V^{n-1}(\omega_X) \; \simeq \;  V^1(\sO_X) \stackrel{{\rm def}}{=} \{ \, \alpha \in \Pic^0(X) \; | \; h^1(X,\alpha)>0 \, \}. $$
 	In fact, if 
 	$\limsup_{d\to \infty} q(X_d) = \infty$, then by \eqref{eqq}  there are infinitely many distinct elements of $V^1(\sO_X)$. As $V^1(\sO_X)$ is an algebraic variety, these elements must form one irreducible component.
 	On the other hand, if
 	$v_1 = \dim V^1(\sO_X)>0$, then, by Proposition \ref{proplb}, $V^1(\sO_X)$\footnote{By \cite[Corollary 19.2]{Sch}, the locus $V^1(\sO_X)$ is a finite union of torsion translates of abelian subvarieties of $\Pic^0(X)$.}  contains at least $d^{2v_1}$ $d$-torsion points for infinitely many $d\geq 1$. 
 	  Hence $q(X_d) \geq q(X) + d^{2v_1} -1$ and the claim follows.
 	
 	Let now  $\Pic^{\tau}(X)$ be  the variety that parameterizes isomorphism classes of holomorphic line bundles on $X$ with torsion first Chern class.
 	By the work of \cite[Theorem 0.1]{gl:gv2} and \cite[Corollaire 2.3]{B}, the irreducible components of $V^{n-1}(\omega_X)$ are related to fibrations over smooth projective curves. More precisely, any positive-dimensional irreducible component $S \subset V^{n-1}(\omega_X)$  is a component of the group $$\Pic^{\tau}(X,p) \; \stackrel{{\rm def}}{=} \; \ker\big( i^*  \colon  \Pic^{\tau}(X) \to \Pic^{\tau}(F) \big)$$ 
 	for some fibration $p  \colon X \to C$ over a smooth projective curve of genus $g\geq 1$ with general fiber $i \colon F \hookrightarrow X$. It follows that $\dim S = g$. Moreover, if $g=1$, then by	\cite[Corollaire 2.3]{B} we have $S \neq p^*\Pic^0(C)$. Therefore, by \cite[Remarque 2.4]{B}, $p$  must posses at least two multiple fibers whose multiplicities are not coprime (\emph{cf}. also   \cite[Exercise 10.3]{Sch}). This proves one of the implications.
 	
 Let now $p\colon X \to C$ be a fibration onto a smooth projective curve of genus $g\geq 1$. 
  	For the other direction, we note that if $g\geq 2$, then, by pulling-back line bundles from  $\Pic^0(C) = V^0(\omega_C)$,  the fibration $p$ gives rise to an irreducible  component of $V^{n-1}(\omega_X)$ (\emph{cf}. \cite[Lemma 6.3]{lombardi:invariants}). 
We now prove that we reach the same conclusion even if $g=1$, $H^2(X,\Z)_{\rm tor}=0$, and the condition on the multiple fibers is verified. 	In fact, the condition on the  multiple fibers  implies that the group $\Gamma^{\tau}(p) \simeq \Pic^{\tau}(X,p) / p^*\Pic^0(C)$ of the  connected components of $\Pic^{\tau}(X,p) $  is non-trivial (\emph{cf}. \cite[Proposition 1.5, Remarque 2.4]{B} and \cite[Exercise 10.3]{Sch}). By \cite[\S 1.6]{B}, as $H^2(X,\Z)_{ \rm tor}=0$, the group  $\Gamma^{\tau}(p)$ is identified with  the group $\Gamma^0(p)$ of the connected components of the following group  $$\Pic^0(X,p) \;  \stackrel{{\rm def}}{ = } \; \Pic^{\tau}(X,p) \cap \Pic^0(X).$$ Therefore  $\Pic^0(X,p)$ contains a connected component different from the neutral component $ p^*\Pic^0(C)$, which, again by \cite[Corollaire 2.3]{B}, it is contained in $V^{n-1}(\omega_X)$.
 \end{proof}

 \begin{rem}\label{remvid}
 Let $S$ be the Cartwright--Steger surface. It follows from \cite{CS} that $H_1(S,\Z)$ is torsion free. By the universal coefficient theorem, we know that $H^2(S,\Z)_{\rm tor}=0$. Moreover, the Albanese map has no multiple fibers (\emph{cf}. \cite[Main Theorem]{Yeung2}). Then Theorem \ref{qfibr} implies that the unramified abelian covers of $S$ have  bounded irregularities (however much more is true for the surface $S$, \emph{cf}. again \cite{Vid17} and \cite{Sto17} for optimal statements).
 \end{rem}
 
 The argument of Proposition \ref{qfibr} extends, in a weaker form, to all Hodge numbers of type $h^{n,i}(X)$ with $i>0$.
 \begin{prop}
 	If $\limsup_{d\to \infty}  h^{n,i}(X_d)  \; = \; \infty$, 
 	then there exists a fibration of $X$ onto a normal projective variety $Y$ of dimension $0<\dim Y \leq n-i$ such that any smooth model of $Y$ is of  maximal Albanese dimension.
 \end{prop}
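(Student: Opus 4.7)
The plan is to adapt the strategy of Theorem \ref{qfibr}, replacing Beauville's structure theorem for $V^1(\sO_X)$ with the Green--Lazarsfeld structure theorem for the higher cohomological support loci $V^i(\omega_X)$. Note $h^{n,i}(X_d) = h^i(X_d, \omega_{X_d})$ by Serre duality, so the relevant locus is
\[
V^i(\omega_X) \; = \; \{\alpha \in \Pic^0(X) \mid h^i(X,\omega_X\otimes\alpha) > 0\}.
\]

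First I would use the same base-change/projection-formula computation that yielded \eqref{eqq} (and that underlies Theorem \ref{cor1}) to write
\[
h^{n,i}(X_d) \; = \; \sum_{\alpha \in S_d} h^i(X,\omega_X \otimes \alpha),
\]
where $S_d \subset \Pic^0(X)$ is the set of $d$-torsion line bundles. Only the $\alpha \in S_d \cap V^i(\omega_X)$ contribute, and by semicontinuity the values $h^i(X,\omega_X\otimes\alpha)$ are uniformly bounded on the closed subset $V^i(\omega_X)$. If every irreducible component of $V^i(\omega_X)$ were zero-dimensional, then $V^i(\omega_X)$ would be a finite set of points, the number of contributing summands would be bounded by $|V^i(\omega_X)|$ independently of $d$, and consequently $h^{n,i}(X_d)$ would be uniformly bounded --- contradicting the hypothesis $\limsup h^{n,i}(X_d) = \infty$. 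Therefore $V^i(\omega_X)$ must contain a positive-dimensional irreducible component $T$.

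Next I would invoke the Green--Lazarsfeld structure theorem \cite[Theorem 0.1]{gl:gv2} (see also \cite[Corollary 19.2]{Sch}): every positive-dimensional irreducible component $T$ of $V^i(\omega_X)$ is a torsion translate of a subtorus of the form $f^*\Pic^0(Y)$, where $f\colon X \to Y$ is a surjective morphism onto a normal projective variety whose smooth models are of maximal Albanese dimension, and the dimension bound $\dim Y \leq n - i$ holds. Since $\dim T > 0$ forces $q(Y) > 0$, one has $\dim Y \geq 1$. Passing to the Stein factorization if $f$ does not already have connected fibers delivers the desired fibration without enlarging $\dim Y$.

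The main subtlety I anticipate is checking that the fibration produced after Stein factorization still enjoys the maximal Albanese dimension property for a smooth model of its target, and that the dimension bound $\dim Y \leq n - i$ is preserved throughout. The dimension bound is built into the Green--Lazarsfeld theorem. The Albanese-dimension property is preserved because Stein factorization leaves the Albanese variety unchanged up to isogeny, and the Albanese image of a resolution of the new target agrees with that of the old one; no further obstacle is expected.
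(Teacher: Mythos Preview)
Your proposal is correct and follows essentially the same approach as the paper's own (very terse) proof: show that the hypothesis forces $V^i(\omega_X)$ to have a positive-dimensional component, then invoke the Green--Lazarsfeld structure theorem \cite[Theorem 0.1]{gl:gv2} to produce the fibration. Your argument is more detailed---you spell out the uniform boundedness reason why a zero-dimensional $V^i(\omega_X)$ would contradict the hypothesis, and you address the Stein factorization step that the paper leaves implicit in its citation---but the strategy is identical. One small quibble: the identity $h^{n,i}(X_d)=h^i(X_d,\omega_{X_d})$ is immediate from $\Omega_{X_d}^n=\omega_{X_d}$ rather than from Serre duality.
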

 \begin{proof}
 	Thanks to  a calculation similar  to \eqref{eqq},  we can construct an irreducible component  $S \subset V^i(\omega_X)$ of positive dimension. 
By \cite[Theorem 0.1]{gl:gv2}, this component induces a fibration of $X$ onto a variety with the desired  properties.
 \end{proof}
 \vspace{0.5 in}

%
%
%
%
%
%
%

\end{document}